\documentclass[10pt]{article}
\usepackage{amsmath, amsthm, amssymb, amsfonts, mathrsfs, mathtools}
\usepackage{graphicx}
\usepackage{makeidx}
\usepackage[left=2cm,right=2cm,top=2cm,bottom=2cm]{geometry}
\usepackage{caption}
\usepackage{subcaption}
\usepackage[section]{placeins}
\usepackage{enumitem}
\usepackage{tikz}
\usepackage{stmaryrd}

\newcommand{\Mod}[1]{\ (\mathrm{mod}\ #1)}

\usetikzlibrary{decorations.pathreplacing}
\tikzstyle{vertex}=[auto=left,circle,draw=black,fill=white, inner sep=1.5]

\usepackage{hyperref}
\hypersetup{colorlinks=true, citecolor={blue}, linkcolor=blue, filecolor=magenta, urlcolor=cyan}

\newtheorem{theorem}{Theorem}[section]

\newtheorem{lema}[theorem]{Lemma}
\newtheorem{corollary}{Corollary}[theorem]

\newtheorem{ex}{Example}[section] 

\renewcommand{\i}{\mathbf{i}}

\newcommand{\Cay}{\operatorname{Cay}}

\newcommand{\ord}{\operatorname{ord}}
\newcommand{\Cl}{\operatorname{Cl}}
\newcommand{\Tr}{\operatorname{Tr}}
\newcommand{\Gal}{\operatorname{Gal}}
\newcommand{\IRR}{\operatorname{IRR}}
\newcommand{\Irr}{\operatorname{Irr}}

\textheight=22cm \textwidth=16cm \oddsidemargin=0.2in
\evensidemargin=0.2in \topmargin=-0.25in

\title{HS-integral and Eisenstein integral normal mixed Cayley graphs}

\author{Monu Kadyan\\
Department of Mathematics\\
Indian Institute of Technology Guwahati, India\\
monu.kadyan@iitg.ac.in}

\date{}

\begin{document}
	\maketitle
	
	\vspace{-0.3in}
	
\begin{center}{\textbf{Abstract}}\end{center}
	
\noindent A mixed graph is said to be HS-\emph{integral} if the eigenvalues of its Hermitian-adjacency matrix of the second kind are integers. A mixed graph is called \emph{Eisenstein integral} if the eigenvalues of its (0, 1)-adjacency matrix are Eisenstein integers. We characterize the set $S$ for which the normal mixed Cayley graph $\text{Cay}(\Gamma, S)$ is HS-integral for any finite group $\Gamma$. We further show that a normal mixed Cayley graph is HS-integral if and only if it is Eisenstein integral. This paper generalizes the results of [M. Kadyan, B. Bhattacharjya. HS-integral and Eisenstein integral mixed Cayley graphs over abelian groups. Linear Algebra Appl. 645:68-90, 2022].

\vspace*{0.3cm}
\noindent 
\textbf{Keywords.} integral graphs; HS-integral mixed graph; Eisenstein integral mixed graph; normal mixed Cayley graph. \\
\textbf{Mathematics Subject Classifications:} 05C50, 20C15.

\section{Introduction}

A \emph{mixed graph} $G$ is a pair $(V(G),E(G))$, where $V(G)$ and $E(G)$ are the vertex and edge sets of $G$, respectively. Here $E(G) \subseteq V(G) \times V(G)\setminus \{(u,u):u\in V(G)\}$. If $G$ is a mixed graph, then $(u,v)\in E(G)$ need not imply that $(v,u)\in E(G)$; see \cite{2015mixed} for further information. If both $(u,v)$ and $(v,u)$ are members of $E(G)$, then $(u,v)$ is referred to as an \textit{undirected edge}. If only one of $(u,v)$ and $(v,u)$ is a member of $E(G)$, then it is called a \textit{directed edge}. As a result, both undirected and directed edges can exist simultaneously in a mixed graph. If all of the edges of $G$ are undirected (resp. directed), we refer to $G$ as a \textit{simple graph} (resp. an \textit{oriented graph}). Some definitions and results of this paper have similarities with those in the paper~\cite{kadyanArxivNorm}. Throughout the paper, we consider $\mathbf i = \sqrt{-1}$ and $\omega_n :=\exp{(\frac{2\pi \mathbf i}{n})}$.  

Assume that $G$ is a mixed graph with $n$ vertices. The (0,1)-\textit{adjacency matrix} and the \textit{Hermitian-adjacency matrix of the second kind} of $G$ are denoted by $\mathcal{A}(G)=(a_{uv})_{n\times n}$ and $\mathcal{H}(G)=(h_{uv})_{n\times n}$, respectively, where 
\[a_{uv} = \left\{ \begin{array}{rl}
	1 &\mbox{ if }
	(u,v)\in E \\ 
	0 &\textnormal{ otherwise,}
\end{array}\right.     ~~~~~\text{ and }~~~~~~ h_{uv} = \left\{ \begin{array}{cl}
	1 &\mbox{ if }
	(u,v)\in E \textnormal{ and } (v,u)\in E \\ \frac{1+\i\sqrt{3}}{2} & \mbox{ if } (u,v)\in E \textnormal{ and } (v,u)\not\in E \\
	\frac{1-\i\sqrt{3}}{2} & \mbox{ if } (u,v)\not\in E \textnormal{ and } (v,u)\in E\\
	0 &\textnormal{ otherwise.}
\end{array}\right.\] 

The Hermitian-adjacency matrix of the second kind was presented by Bojan Mohar \cite{mohar2020new}. An eigenvalue of $\mathcal{H}(G)$ is referred to an \emph{HS-eigenvalue} of $G$. An eigenvalue of $\mathcal{A}(G)$ is known as an \emph{eigenvalue} of $G$. Similarly, the \emph{HS-spectrum} of $G$ is the multi-set of the HS-eigenvalues of $G$, and the \emph{spectrum} of $G$ is the multi-set of the eigenvalues of $G$. The Hermitian-adjacency matrix of the second kind of a mixed graph is a Hermitian matrix, so its HS-eigenvalues are real numbers. However, if a mixed graph $G$ has at least one directed edge, then $\mathcal{A}(G)$ is not a Hermitian matrix (or symmetric). As a result, the eigenvalues of $G$ need not be real numbers. 

A mixed graph $G$ is said to be \textit{HS-integral} if all of its HS-eigenvalues are integers. A mixed graph $G$ is said to be \textit{Eisenstein integral} if all of its eigenvalues are Eisenstein integers. Note that complex numbers of the form $a+b\omega_3$, where $a,b\in \mathbb{Z}$, are known as \emph{Eisenstein integers}. Note that $\mathcal{A}(G)=\mathcal{H}(G)$ for a simple graph $G$. Therefore, the term \emph{integral graph} refers to an HS-integral simple graph. As a result, the words HS-eigenvalue, HS-spectrum and HS-integrality of a simple graph $G$ have the same meaning with that of eigenvalue, spectrum and integrality of $G$, respectively.

In 1974, Harary and Schwenk~\cite{harary1974graphs} raised the question of characterization of integral  graphs. This problem has inspired a lot of interest over the last half-century. For more information on integral graphs, we refer the reader to \cite{ahmadi2009graphs, balinska2002survey, csikvari2010integral, watanabe1979note, watanabe1979integral}.

Throughout the paper, we consider $\Gamma$ to be a finite group and ${\mathbf 1}$ to be the identity element of $\Gamma$. Let $S$ be a subset of $\Gamma$ that does not contain the identity element, that is, ${\mathbf 1} \not \in S$.  If $S$ is closed under inverse (resp. $a^{-1} \not\in S$ for all $a\in S$), it is said to be \textit{symmetric} (resp. \textit{skew-symmetric}). Define $\overline{S}= \{u\in S: u^{-1}\not\in S \}$. Then $S\setminus \overline{S}$ is symmetric, while $\overline{S}$ is skew-symmetric. The \textit{mixed Cayley graph} $G=Cay(\Gamma,S)$ is a mixed graph with $V(G)=\Gamma$ and $E(G)=\{ (a,b):a,b\in \Gamma, ba^{-1}\in S \}$. If $S$ is symmetric (resp. skew-symmetric), we refer $G$ to be a \textit{simple Cayley graph} (resp. \textit{oriented Cayley graph}). A mixed Cayley graph $Cay(\Gamma, S)$ is called \textit{normal} if $S$ is the union of some conjugacy classes of the group $\Gamma$.

In 1982, Bridge and Mena \cite{bridges1982rational} presented a characterization of integral Cayley graphs over abelian groups. Later on, same characterization was obtained by \cite{alperin2012integral,klotz2010integral,2006integral}. For results on integral Cayley graphs over non-abelian groups, we recommend the reader to \cite{cheng2019integral,ku2015cayley, lu2018integral}.   The HS-integrality and Eisenstein integrality of mixed Cayley graphs over abelian groups and cyclic groups are characterized in~\cite{kadyan2021hsIntegralAbelian} and \cite{kadyan2021Secintegral}, respectively. In 2014, Godsil \emph{et al.} \cite{godsil2014rationality} characterized integral normal Cayley graphs. 


The paper is organized as follows. In Section 2, we present some preliminary notions and known results. We also express the HS-eigenvalues of a normal mixed Cayley graph $\Cay(\Gamma, S)$ in terms of the irreducible characters of $\Gamma$. In section 3, we find a characterization of HS-integral normal oriented Cayley graphs. In section 4, we extend the characterization obtained in Section 3 to normal mixed Cayley graphs. In the last section, we show that a normal mixed Cayley graph is HS-integral if and only if it is Eisenstein integral. 


\section{Preliminaries}


For $x\in \Gamma$, let $\ord(x)$ denote the order of $x$. If $g$ and $h$ are elements of the group $\Gamma$, then we call $h$ a \textit{conjugate}\index{conjugate} of $g$ if $g=x^{-1}hx$ for some $x\in \Gamma$. The \textit{conjugacy class}\index{conjugacy class} of $g$, denoted ${\rm Cl}(g)$, is the set of all conjugates of $g$ in $\Gamma$. Define $C_{\Gamma}(g)$ to be the set of all elements of $\Gamma$ that commute with $g$. We denote the \textit{group algebra}\index{group algebra} of $\Gamma$ over a field $\mathbb{F}$ by $\mathbb{F}\Gamma$. That is, $\mathbb{F}\Gamma$ is the set of all formal sums $\sum\limits_{g\in \Gamma}a_g g$, where $a_g\in \mathbb{F}$, and we assume $1.g=g$ to have $\Gamma \subseteq \mathbb{F}\Gamma$.

A \textit{representation}\index{representation} of a finite group $\Gamma$ is a homomorphism $\rho \colon \Gamma \to \text{GL}_n(\mathbb{C})$, where $\text{GL}_n(\mathbb{C})$ is  the set of all $n\times n$ invertible matrices with complex entries. Here, the number $n$ is called the \textit{degree}\index{degree} of $\rho$. Two representations $\rho_1$ and $\rho_2$ of $\Gamma$ of degree $n$ are \textit{equivalent}\index{equivalent} if there is a $T\in \text{GL}_n(\mathbb{C})$ such that $T\rho_1(x)=\rho_2(x)T$ for each $x\in \Gamma$.

Let $\rho \colon \Gamma \to \text{GL}_n(\mathbb{C})$ be a representation of $\Gamma$. The \textit{character}\index{character} $\chi_{\rho}\colon \Gamma \to \mathbb{C}$ of $\rho$ is defined by setting $\chi_{\rho}(x):=\Tr(\rho(x))$ for $x\in \Gamma$, where $\Tr(\rho(x))$ is the trace of $\rho(x)$. By degree of $\chi_{\rho}$, we mean the degree of $\rho$, which is simply $\chi_{\rho}(\textbf{1})$. If $W$ is a $\rho(x)$-invariant subspace of $\mathbb{C}^n$ for each $x\in \Gamma$, then we say that $W$ is a $\rho(\Gamma)$-invariant subspace of $\mathbb{C}^n$. If $\{ \mathbf{0}\}$ and $\mathbb{C}^n$ are the only $\rho(\Gamma)$-invariant subspaces of $\mathbb{C}^n$, then  we say $\rho$ an \textit{irreducible representation}\index{irreducible representation} of $\Gamma$, and the corresponding character $\chi_{\rho}$ an \textit{irreducible character}\index{irreducible character} of $\Gamma$. 

For a group $\Gamma$, we denote by $\IRR(\Gamma)$ and $\Irr(\Gamma)$ the complete set of non-equivalent irreducible representations of $\Gamma$  and the complete set of non-equivalent irreducible characters of $\Gamma$, respectively. For $z\in \mathbb{C}$, let $\overline{z}$ denote the complex conjugate of $z$ and $\Re (z)$ (resp. $\Im (z)$) denote the real part (resp. imaginary part) of the complex number $z$.

\begin{theorem}[\cite{steinberg2009representation}]\label{NewThmChap1Added}
Let $\Gamma$ be a finite group and $\rho$ be a representation of $\Gamma$ of degree $k$ with corresponding  character $\chi$. If $x\in \Gamma$ and $\ord(x)=m$, then the following assertions hold.
\begin{enumerate}[label=(\roman*)]
\item $\rho(x)$ is similar to a diagonal matrix with diagonal entries $\epsilon_1,\hdots,\epsilon_k$, where $\epsilon_i^m=1$ for each \linebreak[4] $i\in \{1,\hdots,k\}$.
\item $\chi(x)= \sum\limits_{i=1}^{k}\epsilon_i$, where $\epsilon_i^m=1$ for each $i\in \{1,\hdots,k\}$.
\item $\chi(x^{-1})=\overline{\chi(x)}$.
\end{enumerate}
\end{theorem} 
\begin{proof} Note that $\rho(x)^m$ is an identity matrix. Therefore, $\rho(x)$ is diagonalizable, and that its eigenvalues  are $m$-th roots of unity. Thus the proofs of  Part (i) and Part (ii) follow. 

Again, $xx^{-1}=\mathbf{1}$ gives that $\rho(x^{-1})=\rho(x)^{-1}$. Therefore if $\chi(x)= \sum_{i=1}^{k}\epsilon_i$, then we have that $\chi(x^{-1})= \sum_{i=1}^{k}\epsilon_i^{-1}= \sum_{i=1}^{k}\overline{\epsilon}_i= \overline{\chi(x)}$.
\end{proof}

For a representation $\rho\colon \Gamma \to {\rm GL}_n(\mathbb{C})$ of $\Gamma$, define $\overline{\rho}\colon \Gamma \to {\rm GL}_n(\mathbb{C})$ by $\overline{\rho}(x):=\overline{\rho(x)}$, where $\overline{\rho(x)}$ is the matrix whose entries are the complex conjugates of the corresponding entries of $\rho(x)$. Note that if $\rho$ is irreducible, then $\overline{\rho}$ is also irreducible. Hence we have the following lemma. See Proposition 9.1.1 and Corollary 9.1.2 in \cite{steinberg2009representation} for details. 

\begin{lema}[\cite{steinberg2009representation}]\label{newLemmaConjugateChara}
	Let $\Gamma$ be a finite group and ${\rm Irr}(\Gamma)=\{ \chi_{1},\ldots,\chi_h \}$. If $j\in \{ 1,\ldots,h\}$, then there exists $k\in \{ 1,\ldots,h\}$ satisfying $\overline{\chi}_k=\chi_j$, where $\overline{\chi}_k\colon \Gamma \to \mathbb{C}$ such that $\overline{\chi}_k(x)=\overline{\chi_k(x)}$ for each $x \in \Gamma$.
\end{lema}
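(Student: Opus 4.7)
The plan is to use the standard fact that complex conjugation of representations preserves irreducibility. For the given index $j$, let $\rho_j \in \text{IRR}(\Gamma)$ be an irreducible representation whose character is $\chi_j$, and define the conjugate representation $\overline{\rho_j}: \Gamma \to GL(V)$ by taking entry-wise complex conjugation of matrices with respect to a fixed basis, that is $\overline{\rho_j}(g) = \overline{\rho_j(g)}$. Entry-wise conjugation is multiplicative, so $\overline{\rho_j}(gh) = \overline{\rho_j(g)\rho_j(h)} = \overline{\rho_j(g)}\,\overline{\rho_j(h)} = \overline{\rho_j}(g)\,\overline{\rho_j}(h)$, confirming that $\overline{\rho_j}$ is indeed a representation of $\Gamma$. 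Since the trace commutes with complex conjugation, its character is $\chi_{\overline{\rho_j}}(g) = \overline{\text{Tr}(\rho_j(g))} = \overline{\chi_j(g)}$, which is exactly the function $\overline{\chi_j}$ defined in the statement.

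The main (and essentially only nontrivial) step is to show that $\overline{\rho_j}$ is irreducible. I would do this via the orthogonality relation criterion: a representation $\rho$ of a finite group is irreducible precisely when $\frac{1}{|\Gamma|}\sum_{g \in \Gamma} |\chi_\rho(g)|^2 = 1$. Since $|\chi_j(g)| = |\overline{\chi_j(g)}|$ for every $g \in \Gamma$, irreducibility of $\rho_j$ immediately forces the same inner-product identity for $\overline{\rho_j}$. Alternatively, one can observe that $W \mapsto \overline{W}$ (coordinate-wise conjugation of vectors with respect to the fixed basis) is an inclusion-preserving bijection between $\rho_j(\Gamma)$-invariant subspaces of $V$ and $\overline{\rho_j}(\Gamma)$-invariant subspaces of $V$, which transports the irreducibility of $\rho_j$ directly to $\overline{\rho_j}$. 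Either argument is short, but omitting it would leave the key claim unjustified.

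Finally, since $\overline{\rho_j}$ is irreducible and $\text{Irr}(\Gamma) = \{\chi_1, \ldots, \chi_h\}$ is a complete list of pairwise non-equivalent irreducible characters of $\Gamma$, the character $\overline{\chi_j}$ of $\overline{\rho_j}$ must coincide with some $\chi_k$ on the list. Taking complex conjugates in the identity $\chi_k = \overline{\chi_j}$ and using $\overline{\overline{z}} = z$ yields $\overline{\chi}_k = \chi_j$, which is the desired conclusion. The only subtle point is the preservation of irreducibility under conjugation; everything else is bookkeeping.
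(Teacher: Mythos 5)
Your proposal is correct and follows essentially the same route as the paper, which simply invokes the standard fact that $\overline{\rho}$ is irreducible whenever $\rho$ is (citing Steinberg, Proposition 9.1.1 and Corollary 9.1.2) and then matches $\overline{\chi}_j$ against the complete list $\text{Irr}(\Gamma)$. You merely supply the details (multiplicativity of entry-wise conjugation, the $\frac{1}{|\Gamma|}\sum_g|\chi(g)|^2=1$ criterion) that the paper delegates to the reference.
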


\begin{theorem}[\cite{steinberg2009representation}]\label{OrthoCharaRepr}
Let $\Gamma$ be a finite group and $x,y \in \Gamma$. If $\Irr(\Gamma)=\{ \chi_1,\ldots,\chi_h \}$, then 
\begin{enumerate}[label=(\roman*)]
\item 
\begin{align}
	\sum_{x\in \Gamma} \chi_{j}(x) \overline{\chi_{k} (x)} = \left\{ \begin{array}{cl}
		|\Gamma| & \mbox{ if $j=k$} \\
		0 &   \mbox{ otherwise}, 
	\end{array}\right.\nonumber
\end{align}
\item 
\begin{align}
	\sum_{j=1}^{h} \chi_{j}(x) \overline{\chi_{j} (y)} = \left\{ \begin{array}{ll}
		|C_{\Gamma}(x)| & \mbox{ if $x$ and $y$ are conjugates to each other} \\
		0 &   \mbox{ otherwise}. 
	\end{array}\right.\nonumber
	\end{align}
\end{enumerate}
\end{theorem}

For a function $f \colon \Gamma \to \mathbb{C}$, let $[f(yx^{-1})]_{x,y \in \Gamma}$ be the matrix whose rows and columns are indexed by the elements of $\Gamma$, and for $x,y \in \Gamma$, the $(x,y)$-th entry of the matrix is $f(yx^{-1})$. 

\begin{theorem}[\cite{foster2016spectra}]\label{EigNorColCayMix1}
Let $\Gamma$ be a finite group and $\Irr(\Gamma)=\{\chi_1,\ldots,\chi_h \}$. If $f \colon \Gamma \to \mathbb{C}$ is a class function, then the spectrum of the matrix $[f(yx^{-1})]_{x,y \in \Gamma}$ is $\{[\gamma_1]^{d_1^2},\ldots,[\gamma_h]^{d_h^2}\}$, where $$\gamma_{j} = \frac{1}{\chi_j(\mathbf 1)} \sum_{x\in \Gamma} f(x)\chi_{j}(x) \hspace{0.2cm} \textnormal{ and } \hspace{0.2cm} d_j = \chi_j(\mathbf{1})$$ for each $j \in \{ 1,\ldots , h\}$.
\end{theorem} 

\begin{lema}\label{EigNorCayMix}
Let $\Gamma$ be a finite group. If $\Irr(\Gamma)=\{ \chi_1,\ldots,\chi_h \}$, then the HS-spectrum of the normal mixed Cayley graph $\Cay(\Gamma, S)$ is $\{ [\gamma_{1}]^{d_1^2},\ldots, [\gamma_{h}]^{d_h^2} \},$ where $\gamma_j=\lambda_j + \mu_j$, $$\lambda_j= \frac{1}{\chi_j(\mathbf 1)} \sum_{s \in S \setminus \overline{S}} \chi_j(s),\hspace{0.2cm} \mu_j=\frac{1}{\chi_j(\mathbf 1)}  \sum_{s \in \overline{S}}(\omega_6 \chi_j(s)+\omega_6^5 \chi_j(s^{-1})),$$ and $ d_j=\chi_j(\mathbf 1) \textnormal{ for each } j\in \{1,\ldots,h\}.$
\end{lema}
\begin{proof}
Let $f\colon \Gamma \to \{0,1,\omega_6,\omega_6^5\}$ be defined by
	$$f(s)= \left\{ \begin{array}{rl}
		1 & \mbox{if } s\in S \setminus \overline{S} \\
		\omega_6 & \mbox{if } s\in \overline{S}\\
		\omega_6^5 & \mbox{if } s\in \overline{S}^{-1}\\ 
		0 &   \mbox{otherwise}. 
	\end{array}\right.
	$$
Since $S$ is a union of some conjugacy classes of $\Gamma$, $f$ is a class function. The Hermitian adjacency matrix of the second kind of $\Cay(\Gamma, S)$ is given by $[f(yx^{-1})]_{x,y \in \Gamma}$. By Theorem~\ref{EigNorColCayMix1}, $$\gamma_j= \frac{1}{\chi_j(\mathbf 1)} \bigg( \sum_{s \in S \setminus \overline{S}} \chi_j(s) + \sum_{s \in \overline{S}}\omega_6 \chi_j(s)+ \sum_{s \in \overline{S}^{-1}} \omega_6^5 \chi_j(s) \bigg),$$ and the result follows.
\end{proof}

As special cases of Lemma~\ref{EigNorCayMix}, we have the following two corollaries.

\begin{corollary}\label{coro1EigNorCayMix}
Let $\Gamma$ be a finite group. If $\Irr(\Gamma)=\{ \chi_1,\ldots,\chi_h \}$, then the HS-spectrum (or spectrum) of the normal simple Cayley graph $\Cay(\Gamma, S)$ is $\{ [\lambda_{1}]^{d_1^2},\ldots, [\lambda_{h}]^{d_h^2} \},$ where $$\lambda_j= \frac{1}{\chi_j(\mathbf 1)} \sum_{s \in S } \chi_j(s) \textnormal{ and } d_j=\chi_j(\mathbf 1) \hspace{0.2cm} \textnormal{ for each } j\in \{1,\ldots,h\}.$$
\end{corollary}

\begin{corollary}\label{coro2EigNorCayMix}
Let $\Gamma$ be a finite group. If $\Irr(\Gamma)=\{ \chi_1,\ldots,\chi_h \}$, then the HS-spectrum of the normal oriented Cayley graph $\Cay(\Gamma, S)$ is $\{ [\mu_{1}]^{d_1^2},\ldots, [\mu_{h}]^{d_h^2} \},$ where $$ \mu_j=\frac{1}{\chi_j(\mathbf 1)}  \sum_{s \in S}( \omega_6\chi_j(s)+\omega_6^5\chi_j(s^{-1})) \textnormal{ and } d_j=\chi_j(\mathbf 1) \hspace{0.2cm} \textnormal{ for each } j\in \{1,\ldots,h\}.$$
\end{corollary}

Let $n\geq 2$ be a positive integer.  For a divisor $d$ of $n$, define $G_n(d)=\{k: 1\leq k\leq n-1, \gcd(k,n)=d \}$. It is clear that $G_n(d)=dG_{\frac{n}{d}}(1)$.

Let $\mathbb{B}({\Gamma})$ be the boolean algebra\index{boolean algebra} generated by the subgroups of $\Gamma$. That is, $\mathbb{B}({\Gamma})$ is the set whose elements are obtained by intersections, unions and complements of subgroups of $\Gamma$. Define an equivalence relation $\sim$ on $\Gamma$ such that $x\sim y$ if and only if $y=x^k$ for some $k\in G_m(1)$, where $m=\ord(x)$. For $x\in \Gamma$, let $[ x ]$ denote the equivalence class of $x$ with respect to the relation $\sim$. Note that minimal non-empty sets in a boolean algebra are called its \textit{atoms}\index{atoms}. 

\begin{theorem}[\cite{alperin2012integral}]\label{NewThmAtomsAndEquiv}
The atoms of the boolean algebra $\mathbb{B}(\Gamma)$ are the sets $[x]$ for each $x\in \Gamma$.
\end{theorem}

By Theorem~\ref{NewThmAtomsAndEquiv}, we observe that each element of $\mathbb{B}(\Gamma)$ can be expressed as a disjoint union of the equivalence classes of the relation $\sim$ on $\Gamma$. Thus
\[\mathbb{B}(\Gamma)=\{[ x_1 ]\cup\cdots\cup [ x_k ]\colon  x_1,\ldots,x_k\in \Gamma, k\in \mathbb{N}\}.\]

\begin{theorem}[\cite{godsil2014rationality}]\label{NorMixCayGraphInteg}
Let $\Gamma$ be a finite group and ${\rm Cay}(\Gamma, S)$ be a normal simple Cayley graph. Then  ${\rm Cay}(\Gamma, S)$ is integral if and only if $S \in \mathbb{B}(\Gamma)$.
\end{theorem}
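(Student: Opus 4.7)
The plan is to use Galois theory on the cyclotomic field $\mathbb{Q}(\omega_m)$, where $m$ is the exponent of $\Gamma$. By Corollary \ref{coro1EigNorCayMix}, the eigenvalues of $\text{Cay}(\Gamma,S)$ are $\lambda_j=\chi_j(1)^{-1}\sum_{s\in S}\chi_j(s)$. Writing $S$ as a disjoint union of conjugacy classes $C_1,\dots,C_r$ and using the fact that $\chi_j$ is constant on each $C_i$, one rewrites $\lambda_j=\sum_i |C_i|\,\chi_j(c_i)/\chi_j(1)$, a $\mathbb{Z}$-linear combination of central character values, each of which is a standard algebraic integer. Hence every $\lambda_j$ is an algebraic integer, and the problem reduces to showing $\lambda_j\in\mathbb{Q}$ for all $j$ if and only if $S\in\mathbb{B}(\Gamma)$.

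I will test rationality with the Galois group $\text{Gal}(\mathbb{Q}(\omega_m)/\mathbb{Q})\cong(\mathbb{Z}/m\mathbb{Z})^\ast$. For $k\in(\mathbb{Z}/m\mathbb{Z})^\ast$, the automorphism $\sigma_k:\omega_m\mapsto\omega_m^k$ acts on characters by $\sigma_k(\chi_j(s))=\chi_j(s^k)$, so with $S^{(k)}:=\{s^k:s\in S\}$ one obtains $\sigma_k(\lambda_j)=\chi_j(1)^{-1}\sum_{t\in S^{(k)}}\chi_j(t)$. If $S\in\mathbb{B}(\Gamma)$, then by Lemma \ref{atomsboolean} $S$ is a union of atoms $[x]$, each atom $[x]$ is closed under $y\mapsto y^k$ whenever $\gcd(k,\text{ord}(y))=1$, and this coprimality condition is automatic because $\text{ord}(y)\mid m$; therefore $S^{(k)}=S$, every $\lambda_j$ is Galois-fixed and thus integral. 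Conversely, if all $\lambda_j\in\mathbb{Z}$, then $\sum_{s\in S}\chi_j(s)=\sum_{s\in S^{(k)}}\chi_j(s)$ for every irreducible character $\chi_j$ and every $k\in(\mathbb{Z}/m\mathbb{Z})^\ast$. By Lemma \ref{newLemmaConjugateChara}, $\text{Irr}(\Gamma)$ is closed under complex conjugation, so the same identity holds with $\overline{\chi_j}$ in place of $\chi_j$; then the class functions $\mathbf{1}_S$ and $\mathbf{1}_{S^{(k)}}$ have equal inner products against every irreducible character, and orthonormality of characters forces $\mathbf{1}_S=\mathbf{1}_{S^{(k)}}$, i.e., $S=S^{(k)}$. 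A short Chinese Remainder Theorem step — given $s\in S$ and any $k$ coprime to $\text{ord}(s)$, pick $k'\equiv k\pmod{\text{ord}(s)}$ with $\gcd(k',m)=1$, whence $s^k=s^{k'}\in S^{(k')}=S$ — then yields $[s]\subseteq S$ for every $s\in S$, so $S$ is a union of atoms and $S\in\mathbb{B}(\Gamma)$.

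The most delicate step will be upgrading the Galois-invariance of each eigenvalue to the set equality $S^{(k)}=S$: Galois invariance only gives identities among the unconjugated sums $\sum_{s\in S}\chi_j(s)$, and one needs the conjugate-closure of $\text{Irr}(\Gamma)$ from Lemma \ref{newLemmaConjugateChara} to convert these into genuine Hermitian inner products before applying character orthogonality. The remaining ingredients — the atoms of $\mathbb{B}(\Gamma)$, the exponent of $\Gamma$, and CRT — serve only to align the index sets $\mathbb{Z}/\text{ord}(s)\mathbb{Z}$ and $\mathbb{Z}/m\mathbb{Z}$.
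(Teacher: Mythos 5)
Your proof is correct. Note that the paper itself offers no proof of this statement --- it is imported from Godsil--Spiga \cite{godsil2014rationality} --- so the fair comparison is with the paper's proof of the oriented/HS analogue, Theorem~\ref{MainTheoremIntChara}, and your argument is essentially the simple-graph specialisation of that same Galois-theoretic machinery. The one stylistic difference: to upgrade Galois-invariance of the eigenvalues to the set equality $S^{(k)}=S$, the paper works with the column-orthogonality functions $\theta_t=\sum_j\chi_j(t)\overline{\chi}_j$ (equivalently, invertibility of the character table $E$ with $EE^*=nI_h$) to extract each class sum directly, whereas you use row orthogonality to force $\mathbf{1}_S=\mathbf{1}_{S^{(k)}}$; these are dual forms of the same step, and both hinge on Lemma~\ref{newLemmaConjugateChara} exactly where you flag it. All the individual ingredients check out: $|Cl(s)|\chi_j(s)/\chi_j(1)$ is a central character value and hence an algebraic integer (though the $\lambda_j$ are already algebraic integers simply as eigenvalues of an integer matrix); for $\gcd(k,m)=1$ the map $s\mapsto s^k$ is a bijection of $\Gamma$ commuting with conjugation, so $S^{(k)}$ is again a union of conjugacy classes --- a point worth stating explicitly, since $\mathbf{1}_{S^{(k)}}$ must be a class function before orthogonality of irreducible characters can be applied; the atoms $[x]$ of Lemma~\ref{atomsboolean} are stable under $k$-th powers for $k$ coprime to the exponent; and the CRT lift of a unit modulo $\mathrm{ord}(s)$ to a unit modulo $m$ is standard. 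No gaps.
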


Let $n\equiv 0 \pmod 3$. For a divisor $d$ of $\frac{n}{3}$ and $r\in \{ 1,2\}$, define 
$$G_{n,3}^r(d)=\{ dk: k\equiv r \Mod 3, \gcd(dk,n )= d \}.$$ 
It is easy to see that $G_{n}(d)=G_{n,3}^1(d) \cup G_{n,3}^2(d)$ is a disjoint union and $G_{n,3}^r(d)=dG_{\frac{n}{d}}^r(1)$ for $r=1,2$.

Let $\Gamma(3)$ be the set of all $x\in \Gamma$ satisfying $\ord(x)\equiv 0 \pmod 3$. That is, $\Gamma(3):=\{x\in \Gamma\colon \ord(x)\equiv 0 \pmod 3\}$. Define an equivalence relation $\simeq$ on $\Gamma(3)$ such that $x \simeq y$ if and only if $y=x^k$ for some  $k\in G_{m,3}^1(1)$, where $m=\ord(x)$. Observe that if  $x,y\in \Gamma(3)$ and $x \simeq y$ then $x \sim y$, but the converse need not be true. For example, consider $x=5\pmod {12}$, $y=7\pmod {12}$ in $\mathbb{Z}_{12}$. Here $x,y\in \mathbb{Z}_{12}(3)$ and $x \sim y$, but $x \not\simeq y$. For $x\in \Gamma(3)$, we denote the equivalence class of $x$ with respect to the relation $\simeq$ by $\langle\!\langle x \rangle\!\rangle$. For $\Gamma(3) \neq \emptyset$, define $\mathbb{E}(\Gamma)$ to be the set of all skew-symmetric subsets $S$, where $S=\langle\!\langle x_1 \rangle\!\rangle\cup \cdots \cup \langle\!\langle x_k \rangle\!\rangle$ for some $x_1,\ldots,x_k\in \Gamma(3)$. For  $\Gamma(3) = \emptyset$, define $\mathbb{E}(\Gamma):=\{ \emptyset \}$. Thus
\[\mathbb{E}(\Gamma)=\left\{ \begin{array}{ll}
	\{\langle\!\langle x_1 \rangle\!\rangle\cup \cdots \cup \langle\!\langle x_k \rangle\!\rangle \colon x_1,\ldots,x_k\in \Gamma(3), k\in \mathbb{N}\} &\mbox{ if }\Gamma(3)\neq \emptyset \\ 
	\{ \emptyset\} &\mbox{ if }\Gamma(3)= \emptyset.
\end{array}\right. \]

\section{HS-integral normal oriented Cayley graphs}\label{hs-ori-integral}

Let $\Irr(\Gamma)=\{ \chi_1,\ldots,\chi_h \}$. Let $E$ be the matrix $[E_{jg}]$ of size $h \times n$, whose rows are indexed by $1,\ldots,h$, and columns are indexed by the elements of $\Gamma$ such that $E_{jg}=\chi_{j}(g)$. Note that $EE^*=nI_h$ and the rank of $E$ is $h$, where $E^*$ is the conjugate transpose of $E$. 

It is well known that $\Gal(\mathbb{Q}(\omega_m)/\mathbb{Q})=\{ \sigma_r \colon r\in G_m(1), \sigma_r(\omega_m)=\omega_m^r\}$. For example, see Section 14.5 in \cite{dummitandfoote}. If $m\equiv 0 \Mod 3$, then  $\mathbb{Q}(\omega_3,\omega_m)=\mathbb{Q}(\omega_m)$. Therefore, the Galois group $\Gal(\mathbb{Q}(\omega_3,\omega_m)/\mathbb{Q}(\omega_3))$ is a subgroup of $\Gal(\mathbb{Q}(\omega_m)/\mathbb{Q})$. Thus $\Gal(\mathbb{Q}(\omega_3,\omega_m)/\mathbb{Q}(\omega_3))$ contains  those automorphisms in \linebreak[4] $\Gal(\mathbb{Q}(\omega_m)/\mathbb{Q})$ that fix $\omega_3$. Note that $G_m(1)=G_{m,3}^1(1)\cup G_{m,3}^2(1)$, a disjoint union. Using $\sigma_r(\omega_3)=\omega_3$ for all $r\in G_{m,3}^1(1)$ and $\sigma_r(\omega_3)=\omega_3^2$ for all $r \in G_{m,3}^2(1)$, we get
\[\Gal(\mathbb{Q}(\omega_3,\omega_m)/\mathbb{Q}(\omega_3))= \Gal(\mathbb{Q}(\omega_m)/\mathbb{Q}(\omega_3)) =\{ \sigma_r \colon r\in G_{m,3}^1(1), \sigma_r(\omega_m)=\omega_m^r\}.\]
If $m\not\equiv 0 \Mod 3$, then 
$[\mathbb{Q}(\omega_3,\omega_m) : \mathbb{Q}(\omega_3)]= \varphi(m).$ 
Thus the field $\mathbb{Q}(\omega_3,\omega_m)$ is a Galois extension of $\mathbb{Q}(\omega_3)$ of degree $\varphi(m)$. Any automorphism of the field $\mathbb{Q}(\omega_3,\omega_m)$ is uniquely determined by its action on $\omega_m$. Hence $$\Gal(\mathbb{Q}(\omega_3,\omega_m)/\mathbb{Q}(\omega_3))=\{ \tau_r \colon r\in G_m(1), \tau_r(\omega_m)=\omega_m^r \text{ and }\tau_r(\omega_3)=\omega_3 \}.$$  

Let $g \in \Gamma$, $m=\ord(g)$ and $\chi$ be a character of $\Gamma$. By Theorem~\ref{NewThmChap1Added}, $\chi(g)= \sum_{i=1}^{k}\epsilon_i$, where $\epsilon_1,\ldots , \epsilon_k$ are some $m$-th roots of unity. If $m \equiv 0 \Mod 3$ and $\sigma_r \in \Gal(\mathbb{Q}(\omega_3,\omega_m)/\mathbb{Q}(\omega_3))$, then 
\begin{align*}
\sigma_r (\chi(g))  =  \sigma_r \left(\sum_{i=1}^{k}\epsilon_i \right)  =  \sum_{i=1}^{k}\sigma_r (\epsilon_i) =  \sum_{i=1}^{k}\epsilon_i^r = \chi(g^r).
\end{align*}
Similarly, if $m \not\equiv 0 \Mod 3$ and $\tau_r \in \Gal(\mathbb{Q}(\omega_3,\omega_m)/\mathbb{Q}(\omega_3))$, then also $\tau_r(\chi(g)) = \chi(g^r)$.

\begin{theorem}\label{MainTheoremIntCharaCh5}
Let $\Gamma$ be a finite group and $\Irr(\Gamma)=\{ \chi_1,\ldots,\chi_h \}$. If $x=\sum\limits_{g\in \Gamma} c_g g \in \mathbb{Q}(\omega_3)\Gamma$, then $\chi_j(x)$ is rational for each $j \in \{ 1,\ldots , h \}$ if and only if the following conditions hold:
\begin{enumerate}[label=(\roman*)]
\item $\sum\limits_{s\in \Cl(g_1)}c_s=\sum\limits_{s\in \Cl(g_2)}c_s$ $\hspace{0.1cm}$ for each $g_1,g_2\in \Gamma(3)$ and $g_1\simeq g_2$;
\item $\sum\limits_{s\in \Cl(g_1)}c_s=\sum\limits_{s\in \Cl(g_2)}c_s$ $\hspace{0.1cm}$ for each $g_1,g_2\in \Gamma \setminus \Gamma(3)$ and $g_1\sim g_2$;
\item $\sum\limits_{s\in \Cl(g)}c_s=\sum\limits_{s\in \Cl(g^{-1})}\overline{c}_s$ $\hspace{0.1cm}$ for each $g \in \Gamma$.
\end{enumerate}
\end{theorem}
\begin{proof}
Let $L$ be a set of representatives of the conjugacy classes in $\Gamma$. Since characters are class functions, we have 
\begin{align}
\chi_j(x)=\sum_{g\in L} \bigg( \sum_{s\in \Cl(g)}  c_s\bigg) \chi_j(g) \textnormal{ for each } j \in \{ 1,\ldots , h \}.\label{ConjEqThetaEqua5Ch5}
\end{align} 
Assume that $\chi_j(x) \in \mathbb{Q}$ for each $j \in \{ 1,\ldots , h \}$. 
Let $g_1,g_2\in \Gamma(3)$, $g_1 \simeq g_2$ and $m=\ord(g_1)$. Therefore, there exist $r\in G_{m,3}^1(1)$ and $\sigma_r \in \Gal(\mathbb{Q}(\omega_m)/\mathbb{Q}(\omega_3))$ such that $g_2=g_1^r$ and $\sigma_r(\omega_m)=\omega_m^r$. Note that $\sigma_r(\chi_j(g_1))= \chi_j(g_1^r)$ for each $j \in \{ 1,\ldots , h \}$. For $t\in \Gamma$, let $\theta_t=\sum\limits_{j=1}^{h} \chi_j(t) \overline{\chi}_j$, where $\overline{\chi}_j(g)=\overline{\chi_j(g)}$ for each $g\in \Gamma$. By Theorem~\ref{OrthoCharaRepr}, we have
\begin{align}
	\theta_t(u)= \left\{ \begin{array}{ll}
		|C_{\Gamma}(t)| & \mbox{ if $u$ and $t$ are conjugates to each other} \\
		0 &   \mbox{ otherwise}. 
	\end{array}\right.\nonumber
\end{align} So $\theta_t(x)= |C_{\Gamma}(t)| \sum\limits_{s\in \Cl(t)} c_s \in \mathbb{Q}(\omega_3)$, and it gives that $\sigma_r(\theta_t(x))=\theta_t(x)$. Since $\chi_j(x)$ is assumed to be a rational number, we have $\sigma_r(\chi_j(x))=\chi_j(x)$ for each $j\in \{1,\ldots,h\}$. Thus 
\begin{align}
|C_{\Gamma}(g_1)| \sum\limits_{s\in \Cl(g_1)} c_s=\theta_{g_1}(x)= \sigma_r(\theta_{g_1}(x)) &=\sum\limits_{j=1}^{h} \sigma_r(\chi_j(g_1)) \sigma_r(\overline{\chi}_j(x)) \nonumber\\
&=\sum\limits_{j=1}^{h} \chi_j(g_1^r) \overline{\chi}_j(x) \nonumber\\
&=\theta_{g_1^r}(x) =\theta_{g_2}(x)=|C_{\Gamma}(g_2)| \sum\limits_{s\in \Cl(g_2)}c_s. \label{ConjEqThetaEquaCh5}
\end{align}
Since $g_1\simeq g_2$, we have $C_{\Gamma}(g_1)=C_{\Gamma}(g_2)$. So Equation $(\ref{ConjEqThetaEquaCh5})$ implies that $\sum\limits_{s\in \Cl(g_1)}c_s=\sum\limits_{s\in \Cl(g_2)}c_s$. Hence condition (i) holds.

Now let $g_1,g_2\in \Gamma \setminus \Gamma(3)$, $g_1 \sim g_2$, and $m=\ord(g_1)$. Then there is $r\in G_{m}(1)$ and \linebreak[4] $\tau_r \in \Gal(\mathbb{Q}(\omega_3,\omega_m)/\mathbb{Q}(\omega_3))$ such that $g_2=g_1^r$, $\tau_r(\omega_m)=\omega_m^r$ and $\tau_r(\omega_3)=\omega_3$. Now proceeding as in the proof of condition (i), we have $\sum\limits_{s\in \Cl(g_1)}c_s=\sum\limits_{s\in \Cl(g_2)}c_s$. Thus condition (ii) also holds.

Again
\begin{align}
0=\chi_j(x)- \overline{\chi_j(x)}&=\sum_{g\in L} \bigg( \sum_{s\in \Cl(g)}  c_s\bigg) \chi_j(g) - \sum_{g\in L} \bigg( \sum_{s\in \Cl(g)} \overline{c}_s\bigg) \overline{\chi_j(g)}\nonumber \\
&=\sum_{g\in L} \bigg( \sum_{s\in \Cl(g)}  c_s\bigg) \chi_j(g) - \sum_{g\in L} \bigg( \sum_{s\in \Cl(g)} \overline{c}_s\bigg) \chi_j(g^{-1})\nonumber\\
&=\sum_{g\in L}  \bigg( \sum_{s\in \Cl(g)}  c_s - \sum_{s\in \Cl(g^{-1})}  \overline{c}_s  \bigg) \chi_j(g),\nonumber
\end{align} and so 
\begin{align}
\sum_{g\in L}  \bigg( \sum_{s\in \Cl(g)}  c_s - \sum_{s\in \Cl(g^{-1})}  \overline{c}_s  \bigg) \begin{bmatrix}
			\chi_1(g) \\
			\vdots \\
			\chi_h(g)
		\end{bmatrix}=\begin{bmatrix}
			0 \\
			\vdots \\
			0
		\end{bmatrix}.\label{eqLIsumZeroCh5}
\end{align} 
Note that the number of irreducible characters of $\Gamma$ is equal to the number of conjugacy classes of $\Gamma$, that is, $|L|=h$.
Since characters are class functions and rank of $E$ is $h$, the columns of $E$ corresponding to the elements of $L$ are linearly independent. Thus by Equation $(\ref{eqLIsumZeroCh5})$, 
$ \sum\limits_{s\in \Cl(g)}  c_s - \sum\limits_{s\in \Cl(g^{-1})}  \overline{c}_s =0$ for all $g\in L$, and so condition (iii) holds.


Conversely, assume that the three conditions of the theorem hold. Let $n$ be the number of elements of $\Gamma$. We have the following two cases.

\noindent\textbf{Case 1.} Assume that $n\equiv 0 \Mod 3$.
Let $\sigma_k \in \Gal(\mathbb{Q}(\omega_3,\omega_n)/\mathbb{Q}(\omega_3))$. Then $\sigma_k(\omega_n)=\omega_n^k$ and $k\in G_{n,3}^1(1)$, and so $\sigma_k(\chi_j(g))=\chi_j(g^k)$ for each $j \in \{ 1,\ldots , h \}$. Thus
\begin{align}
\sigma_k(\chi_j(x)) &= \sum_{g\in L} \bigg( \sum_{s\in \Cl(g)} c_s\bigg) \sigma_k(\chi_j(g))\nonumber\\
&= \sum_{g\in L} \bigg( \sum_{s\in \Cl(g)} c_s\bigg) \chi_j(g^k).
\label{ConjEqThetaEqua3Ch5}
\end{align} 
In the sum of Equation~(\ref{ConjEqThetaEqua3Ch5}) we have two possible casses, namely, $g \in \Gamma(3)$  or $ g \in \Gamma \setminus \Gamma(3)$. If $g \in \Gamma(3)$, then using the fact $g\simeq g^k$ and condition (i), we get  $\sum\limits_{s\in \Cl(g)}c_s=\sum\limits_{s\in \Cl(g^k)}c_s$. Similarly, if $ g \in \Gamma \setminus \Gamma(3)$, then using the fact $g\sim g^k$ and condition (ii), we get  $\sum\limits_{s\in \Cl(g)}c_s=\sum\limits_{s\in \Cl(g^k)}c_s$. Therefore, we have $\sum\limits_{s\in \Cl(g)}c_s=\sum\limits_{s\in \Cl(g^k)}c_s$ for each $g \in \Gamma$. Now from Equation $(\ref{ConjEqThetaEqua3Ch5})$, we get 
\begin{align}
\sigma_k(\chi_j(x)) &= \sum_{g\in L} \bigg( \sum_{s\in \Cl(g^k)} c_s\bigg) \chi_j(g^k)=\chi_j(x). \label{neweqCh5}
\end{align} 
The second equality in Equation $(\ref{neweqCh5})$ holds, because $\{ g^k\colon g \in L\}$ is also a set of representatives of conjugacy classes of $\Gamma$. Now since $\sigma_k(\chi_j(x)) = \chi_j(x)$ for each $k \in G_{n,3}^1(1)$, we have that $\chi_j(x)\in \mathbb{Q}(\omega_3)$. 

\noindent\textbf{Case 2.} Assume that $n\not\equiv 0 \Mod 3$. Let $\tau_r \in \Gal(\mathbb{Q}(\omega_3,\omega_n)/\mathbb{Q}(\omega_3))$. Then we have $\tau_r(\chi_j(g))=\chi_j(g^r)$ for each $j \in \{ 1,\ldots , h \}$. Note that $g \sim g^r$. Therefore using Equation (\ref{ConjEqThetaEqua5Ch5}) and condition (ii), we have
\begin{align*}
\tau_r(\chi_j(x)) &= \sum_{g\in L} \bigg( \sum_{s\in \Cl(g)} c_s\bigg) \tau_r(\chi_j(g))\nonumber\\
&= \sum_{g\in L} \bigg( \sum_{s\in \Cl(g)} c_s\bigg) \chi_j(g^r)\\
&= \sum_{g\in L} \bigg( \sum_{s\in \Cl(g^r)} c_s\bigg) \chi_j(g^r)\\
&= \chi_j(x).
\end{align*}
This gives that $\chi_j(x)\in \mathbb{Q}(\omega_3)$. Thus in both the cases, we get $\chi_j(x)\in \mathbb{Q}(\omega_3)$. Taking complex conjugates in Equation $(\ref{ConjEqThetaEqua5Ch5})$, we get
\begin{align}
\overline{\chi_j(x)}=\sum_{g\in L} \bigg( \sum_{s\in \Cl(g)} \overline{c}_s\bigg) \overline{\chi_j(g)} &= \sum_{g\in L} \bigg( \sum_{s\in \Cl(g)} \overline{c}_s\bigg) \chi_j(g^{-1}) \nonumber\\
&= \sum_{g\in L} \bigg( \sum_{s\in \Cl(g^{-1})} c_s\bigg) \chi_j(g^{-1})\nonumber\\
&= \chi_j(x). \label{ConjEqThetaEqua4Ch5}
\end{align}
Equation $(\ref{ConjEqThetaEqua4Ch5})$ implies that $\chi_j(x)\in \mathbb{Q}$ for all $j \in \{ 1, \ldots , h\}$.
\end{proof}

Indeed, we can replace condition (i) of Theorem \ref{MainTheoremIntCharaCh5} by $\sum\limits_{s\in \Cl(x)}c_s=\sum\limits_{s\in \Cl(y)}c_s$ for all $x,y \in \langle\!\langle g \rangle\!\rangle$ and $g \in \Gamma(3)$.

\begin{theorem}\label{NorOriCayGraphIntegCh5}
Let $\Gamma$ be a finite group and $\Cay(\Gamma, S)$ be a normal oriented Cayley graph. Then $\Cay(\Gamma, S)$ is HS-integral if and only if $S \in \mathbb{E}(\Gamma)$.
\end{theorem}
\begin{proof}
Let $\Irr(\Gamma)=\{ \chi_1,\ldots,\chi_h \}$ and $x=\sum\limits_{g\in \Gamma}  c_g g$, where 
$$c_g= \left\{ \begin{array}{rl}
		-\omega_3^2 & \mbox{if } g\in S  \\
		-\omega_3 & \mbox{if } g\in S^{-1}\\
		0 &   \mbox{otherwise}. 
	\end{array}\right.$$ 
Note that $-\omega_3^2=\omega_6$ and $-\omega_3=\omega_6^5$. Thus $\chi_j(x)= \sum\limits_{s\in S}  (-\omega_3^2\chi_j(s)-\omega_3 \chi_j(s^{-1}))$, and so $\frac{\chi_j(x)}{\chi_j(\mathbf 1)}$ is an HS-eigenvalue of $\Cay(\Gamma, S)$. Assume that the normal oriented Cayley graph $\Cay(\Gamma, S)$ is HS-integral. Thus $\chi_j(x)$ is an integer for each $j \in \{ 1,\ldots , h \}$, and therefore the three conditions of Theorem \ref{MainTheoremIntCharaCh5} are satisfied for $x$. Using the fact that $g \sim g^{-1}$, and conditions (ii) and (iii) of Theorem \ref{MainTheoremIntCharaCh5}, we get $\Im \left(\sum\limits_{s\in \Cl(g)}c_s \right)=0$ for all $g \in \Gamma\setminus \Gamma (3)$. Note that $S$ is a union of some conjugacy classes of $\Gamma$. Therefore, if $g\in S$ then $\Cl(g)\subseteq S$, and so by the definition of $c_g$, we get $\Im \left(\sum\limits_{s\in \Cl(g)}c_s\right)=\frac{\sqrt{3}|\Cl(g)|}{2}\neq 0$. Thus $S \cap (\Gamma \setminus \Gamma(3))=\emptyset$, that is, $S\subseteq \Gamma(3)$. Again, let  $g_1 \in S$, $g_2 \in \Gamma(3)$ and $ g_1 \simeq g_2$. By the first condition of Theorem \ref{MainTheoremIntCharaCh5}, we get $0 \neq \sum\limits_{s\in \Cl(g_1)}c_s=\sum\limits_{s\in \Cl(g_2)}c_s$, which implies that $g_2\in S$. Thus $g_1 \in S$ gives $\langle\!\langle g_1 \rangle\!\rangle \subseteq S$. Hence $S\in \mathbb{E}(\Gamma)$. 
	
Conversely, assume that $S\in \mathbb{E}(\Gamma)$. Let $\Cay(\Gamma, S)$ be a normal oriented Cayley graph, so that $S$ is a union of some conjugacy classes of $\Gamma$.
Let \[S= \langle\!\langle x_1 \rangle\!\rangle \cup \cdots \cup \langle\!\langle x_r \rangle\!\rangle = \Cl(y_1) \cup \cdots \cup \Cl(y_k) \subseteq \Gamma(3)\]
 for some $x_1,\ldots,x_r, y_1,\ldots,y_k\in \Gamma(3)$. We have $$S^{-1}= \langle\!\langle x_1^{-1} \rangle\!\rangle \cup\cdots \cup \langle\!\langle x_r^{-1} \rangle\!\rangle = \Cl(y_1^{-1}) \cup \cdots \cup \Cl(y_k^{-1}) \subseteq \Gamma(3). $$

Now for $g_1,g_2\in \Gamma(3)$, if $g_1\simeq g_2$ then $\Cl(g_1),\Cl(g_2) \subseteq S$ or $\Cl(g_1),\Cl(g_2) \subseteq S^{-1}$ or  $\Cl(g_1),\Cl(g_2) \subseteq (S \cup S^{-1})^{c}$. Note that $|\Cl(g_1)|=|\Cl(g_2)|$. For all the cases, using the definition of $c_g$, we find 
$$\sum\limits_{s\in \Cl(g_1)}c_s=\sum\limits_{s\in \Cl(g_2)}c_s.$$ 
Thus condition (i) of Theorem~\ref{MainTheoremIntCharaCh5} holds. If  $g_1,g_2\in \Gamma \setminus \Gamma(3)$ and $g_1 \sim g_2$, then clearly $\Cl(g_1),\Cl(g_2) \subseteq \Gamma \setminus \Gamma(3)$. Therefore $\Cl(g_1),\Cl(g_2) \subseteq (S \cup S^{-1})^c$. Accordingly, $$\sum\limits_{s\in \Cl(g_1)}c_s = 0 = \sum\limits_{s\in \Cl(g_2)}c_s.$$ Hence condition (ii) of Theorem~\ref{MainTheoremIntCharaCh5} also holds.

Again for $g \in \Gamma$, we have $\Cl(g) \subseteq S$ if and only if $\Cl(g^{-1}) \subseteq S^{-1}$. Therefore we have $\sum\limits_{s\in \Cl(g)}c_s=\sum\limits_{s\in \Cl(g^{-1})}\overline{c}_s$, and so condition (iii) of Theorem~\ref{MainTheoremIntCharaCh5} also holds. Thus by Theorem~\ref{MainTheoremIntCharaCh5}, $\chi_j(x)$ is a rational number for each $j \in \{ 1,\ldots , h \}$. Consequently, the HS-eigenvalue $\mu_j := \frac{\chi_j(x)}{\chi_j(\mathbf 1)}$ of $\Cay(\Gamma, S)$ is a rational algebraic integer, and hence an integer for each $j \in \{ 1,\ldots , h \}$.
\end{proof}


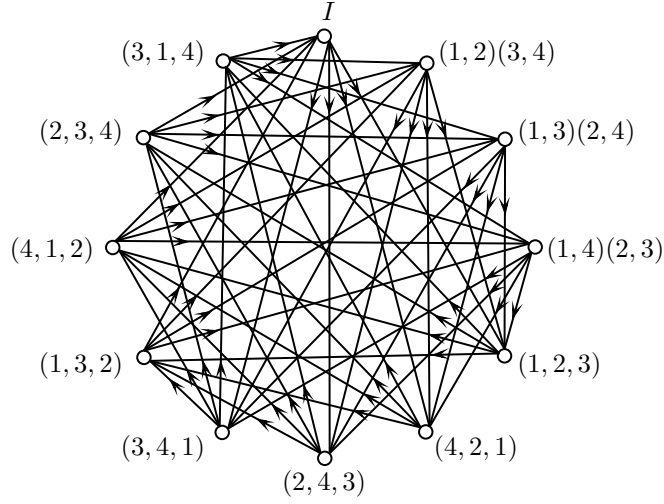
\begin{figure}[ht]
\centering

\tikzset{every picture/.style={line width=0.75pt}} 

\begin{tikzpicture}[x=0.2pt,y=0.2pt,yscale=-1,xscale=1]

\draw    (841.32,693.15) -- (509.58,109.42) ;
\draw    (699.58,840.42) -- (509.58,109.42) ;
\draw    (510.58,890.42) -- (509.58,109.42) ;
\draw    (318.58,843.42) -- (509.58,109.42) ;
\draw    (159.32,696.15) -- (487.6,100.86) ;
\draw    (105.58,488.42) -- (487.6,100.86) ;
\draw    (167.58,285.42) -- (487.6,100.86) ;
\draw    (319.58,142.42) -- (487.6,100.86) ;
\draw    (841.32,693.15) -- (694.32,164.58) ;
\draw    (699.58,840.42) -- (694.32,164.58) ;
\draw    (510.58,890.42) -- (694.32,164.58) ;
\draw    (318.58,843.42) -- (694.32,164.58) ;
\draw    (319.58,142.42) -- (681.6,151.86) ;
\draw    (167.58,285.42) -- (681.6,151.86) ;
\draw    (105.58,488.42) -- (681.6,151.86) ;
\draw    (159.32,696.15) -- (681.6,151.86) ;
\draw  [color={rgb, 255:red, 0; green, 0; blue, 0 }  ,draw opacity=1 ][line width=0.75]  (487.6,100.86) .. controls (487.6,93.84) and (493.3,88.15) .. (500.32,88.15) .. controls (507.35,88.15) and (513.04,93.84) .. (513.04,100.86) .. controls (513.04,107.89) and (507.35,113.58) .. (500.32,113.58) .. controls (493.3,113.58) and (487.6,107.89) .. (487.6,100.86) -- cycle ;
\draw  [color={rgb, 255:red, 0; green, 0; blue, 0 }  ,draw opacity=1 ][line width=0.75]  (681.6,151.86) .. controls (681.6,144.84) and (687.3,139.15) .. (694.32,139.15) .. controls (701.35,139.15) and (707.04,144.84) .. (707.04,151.86) .. controls (707.04,158.89) and (701.35,164.58) .. (694.32,164.58) .. controls (687.3,164.58) and (681.6,158.89) .. (681.6,151.86) -- cycle ;
\draw  [color={rgb, 255:red, 0; green, 0; blue, 0 }  ,draw opacity=1 ][line width=0.75]  (829.6,295.86) .. controls (829.6,288.84) and (835.3,283.15) .. (842.32,283.15) .. controls (849.35,283.15) and (855.04,288.84) .. (855.04,295.86) .. controls (855.04,302.89) and (849.35,308.58) .. (842.32,308.58) .. controls (835.3,308.58) and (829.6,302.89) .. (829.6,295.86) -- cycle ;
\draw  [color={rgb, 255:red, 0; green, 0; blue, 0 }  ,draw opacity=1 ][line width=0.75]  (296.6,147.86) .. controls (296.6,140.84) and (302.3,135.15) .. (309.32,135.15) .. controls (316.35,135.15) and (322.04,140.84) .. (322.04,147.86) .. controls (322.04,154.89) and (316.35,160.58) .. (309.32,160.58) .. controls (302.3,160.58) and (296.6,154.89) .. (296.6,147.86) -- cycle ;
\draw  [color={rgb, 255:red, 0; green, 0; blue, 0 }  ,draw opacity=1 ][line width=0.75]  (145.6,292.86) .. controls (145.6,285.84) and (151.3,280.15) .. (158.32,280.15) .. controls (165.35,280.15) and (171.04,285.84) .. (171.04,292.86) .. controls (171.04,299.89) and (165.35,305.58) .. (158.32,305.58) .. controls (151.3,305.58) and (145.6,299.89) .. (145.6,292.86) -- cycle ;
\draw  [color={rgb, 255:red, 0; green, 0; blue, 0 }  ,draw opacity=1 ][line width=0.75]  (87.6,500.86) .. controls (87.6,493.84) and (93.3,488.15) .. (100.32,488.15) .. controls (107.35,488.15) and (113.04,493.84) .. (113.04,500.86) .. controls (113.04,507.89) and (107.35,513.58) .. (100.32,513.58) .. controls (93.3,513.58) and (87.6,507.89) .. (87.6,500.86) -- cycle ;
\draw  [color={rgb, 255:red, 0; green, 0; blue, 0 }  ,draw opacity=1 ][line width=0.75]  (146.6,708.86) .. controls (146.6,701.84) and (152.3,696.15) .. (159.32,696.15) .. controls (166.35,696.15) and (172.04,701.84) .. (172.04,708.86) .. controls (172.04,715.89) and (166.35,721.58) .. (159.32,721.58) .. controls (152.3,721.58) and (146.6,715.89) .. (146.6,708.86) -- cycle ;
\draw  [color={rgb, 255:red, 0; green, 0; blue, 0 }  ,draw opacity=1 ][line width=0.75]  (294.6,850.86) .. controls (294.6,843.84) and (300.3,838.15) .. (307.32,838.15) .. controls (314.35,838.15) and (320.04,843.84) .. (320.04,850.86) .. controls (320.04,857.89) and (314.35,863.58) .. (307.32,863.58) .. controls (300.3,863.58) and (294.6,857.89) .. (294.6,850.86) -- cycle ;
\draw  [color={rgb, 255:red, 0; green, 0; blue, 0 }  ,draw opacity=1 ][line width=0.75]  (488.6,899.86) .. controls (488.6,892.84) and (494.3,887.15) .. (501.32,887.15) .. controls (508.35,887.15) and (514.04,892.84) .. (514.04,899.86) .. controls (514.04,906.89) and (508.35,912.58) .. (501.32,912.58) .. controls (494.3,912.58) and (488.6,906.89) .. (488.6,899.86) -- cycle ;
\draw  [color={rgb, 255:red, 0; green, 0; blue, 0 }  ,draw opacity=1 ][line width=0.75]  (679.6,849.86) .. controls (679.6,842.84) and (685.3,837.15) .. (692.32,837.15) .. controls (699.35,837.15) and (705.04,842.84) .. (705.04,849.86) .. controls (705.04,856.89) and (699.35,862.58) .. (692.32,862.58) .. controls (685.3,862.58) and (679.6,856.89) .. (679.6,849.86) -- cycle ;
\draw  [color={rgb, 255:red, 0; green, 0; blue, 0 }  ,draw opacity=1 ][line width=0.75]  (828.6,705.86) .. controls (828.6,698.84) and (834.3,693.15) .. (841.32,693.15) .. controls (848.35,693.15) and (854.04,698.84) .. (854.04,705.86) .. controls (854.04,712.89) and (848.35,718.58) .. (841.32,718.58) .. controls (834.3,718.58) and (828.6,712.89) .. (828.6,705.86) -- cycle ;
\draw  [color={rgb, 255:red, 0; green, 0; blue, 0 }  ,draw opacity=1 ][line width=0.75]  (886.6,499.86) .. controls (886.6,492.84) and (892.3,487.15) .. (899.32,487.15) .. controls (906.35,487.15) and (912.04,492.84) .. (912.04,499.86) .. controls (912.04,506.89) and (906.35,512.58) .. (899.32,512.58) .. controls (892.3,512.58) and (886.6,506.89) .. (886.6,499.86) -- cycle ;
\draw  [fill={rgb, 255:red, 0; green, 0; blue, 0 }  ,fill opacity=1 ] (485.48,222.16) -- (476.84,235.86) -- (476.57,219.67) -- (478.93,228.39) -- cycle ;
\draw    (841.32,693.15) -- (842.32,308.58) ;
\draw    (699.58,840.42) -- (842.32,308.58) ;
\draw    (510.58,890.42) -- (842.32,308.58) ;
\draw    (318.58,843.42) -- (842.32,308.58) ;
\draw    (841.32,693.15) -- (892.58,509.42) ;
\draw    (699.58,840.42) -- (892.58,509.42) ;
\draw    (510.58,890.42) -- (892.58,509.42) ;
\draw    (318.58,843.42) -- (892.58,509.42) ;
\draw    (319.58,142.42) -- (829.6,295.86) ;
\draw    (167.58,285.42) -- (829.6,295.86) ;
\draw    (105.58,488.42) -- (829.6,295.86) ;
\draw    (159.32,696.15) -- (829.6,295.86) ;
\draw    (319.58,142.42) -- (888.58,493.42) ;
\draw    (167.58,285.42) -- (888.58,493.42) ;
\draw    (105.58,488.42) -- (888.58,493.42) ;
\draw    (159.32,696.15) -- (888.58,493.42) ;
\draw    (314.58,158.42) -- (831.58,701.42) ;
\draw    (163.58,303.42) -- (831.58,701.42) ;
\draw    (112.58,506.42) -- (831.58,701.42) ;
\draw    (170.58,715.42) -- (831.58,701.42) ;
\draw    (314.58,158.42) -- (682.58,841.42) ;
\draw    (163.58,303.42) -- (682.58,841.42) ;
\draw    (112.58,506.42) -- (682.58,841.42) ;
\draw    (170.58,715.42) -- (682.58,841.42) ;
\draw    (314.58,158.42) -- (493.58,890.42) ;
\draw    (163.58,303.42) -- (493.58,890.42) ;
\draw    (112.58,506.42) -- (493.58,890.42) ;
\draw    (170.58,715.42) -- (493.58,890.42) ;
\draw    (314.58,158.42) -- (302.58,840.42) ;
\draw    (163.58,303.42) -- (302.58,840.42) ;
\draw    (112.58,506.42) -- (302.58,840.42) ;
\draw    (170.58,715.42) -- (302.58,840.42) ;
\draw  [fill={rgb, 255:red, 0; green, 0; blue, 0 }  ,fill opacity=1 ] (514.68,227.7) -- (509.81,243.15) -- (505.43,227.56) -- (509.93,235.39) -- cycle ;
\draw  [fill={rgb, 255:red, 0; green, 0; blue, 0 }  ,fill opacity=1 ] (540.93,213.58) -- (541.34,229.77) -- (532.13,216.45) -- (538.93,222.39) -- cycle ;
\draw  [fill={rgb, 255:red, 0; green, 0; blue, 0 }  ,fill opacity=1 ] (562.15,193.36) -- (565.75,209.15) -- (554.09,197.91) -- (561.93,202.39) -- cycle ;
\draw  [fill={rgb, 255:red, 0; green, 0; blue, 0 }  ,fill opacity=1 ] (644.72,264.8) -- (633.21,276.2) -- (636.6,260.36) -- (636.93,269.39) -- cycle ;
\draw  [fill={rgb, 255:red, 0; green, 0; blue, 0 }  ,fill opacity=1 ] (672.32,270.99) -- (664.04,284.91) -- (663.34,268.73) -- (665.93,277.39) -- cycle ;
\draw  [fill={rgb, 255:red, 0; green, 0; blue, 0 }  ,fill opacity=1 ] (700.04,272.93) -- (694.45,288.13) -- (690.8,272.35) -- (694.93,280.39) -- cycle ;
\draw  [fill={rgb, 255:red, 0; green, 0; blue, 0 }  ,fill opacity=1 ] (728.55,271.5) -- (729.65,287.66) -- (719.88,274.74) -- (726.93,280.39) -- cycle ;
\draw  [fill={rgb, 255:red, 0; green, 0; blue, 0 }  ,fill opacity=1 ] (781.43,376.31) -- (768.02,385.39) -- (774.27,370.45) -- (772.93,379.39) -- cycle ;
\draw  [fill={rgb, 255:red, 0; green, 0; blue, 0 }  ,fill opacity=1 ] (796.1,399.53) -- (783.61,409.83) -- (788.42,394.37) -- (787.93,403.39) -- cycle ;
\draw  [fill={rgb, 255:red, 0; green, 0; blue, 0 }  ,fill opacity=1 ] (818.69,415.39) -- (809.59,428.79) -- (809.86,412.59) -- (811.93,421.39) -- cycle ;
\draw  [fill={rgb, 255:red, 0; green, 0; blue, 0 }  ,fill opacity=1 ] (846.08,418.36) -- (842.41,434.13) -- (836.84,418.93) -- (841.93,426.39) -- cycle ;
\draw  [fill={rgb, 255:red, 0; green, 0; blue, 0 }  ,fill opacity=1 ] (823.97,554.5) -- (808.22,558.28) -- (819.33,546.49) -- (814.93,554.39) -- cycle ;
\draw  [fill={rgb, 255:red, 0; green, 0; blue, 0 }  ,fill opacity=1 ] (829.85,578.92) -- (815,585.39) -- (823.88,571.85) -- (820.93,580.39) -- cycle ;
\draw  [fill={rgb, 255:red, 0; green, 0; blue, 0 }  ,fill opacity=1 ] (839.01,610.63) -- (826.76,621.21) -- (831.22,605.65) -- (830.93,614.68) -- cycle ;
\draw  [fill={rgb, 255:red, 0; green, 0; blue, 0 }  ,fill opacity=1 ] (866.59,618.57) -- (857.71,632.11) -- (857.73,615.91) -- (859.93,624.68) -- cycle ;
\draw  [fill={rgb, 255:red, 0; green, 0; blue, 0 }  ,fill opacity=1 ] (724.04,707.66) -- (708.2,704.3) -- (723.29,698.43) -- (715.93,703.68) -- cycle ;
\draw  [fill={rgb, 255:red, 0; green, 0; blue, 0 }  ,fill opacity=1 ] (716.51,674.87) -- (702.36,666.99) -- (718.51,665.84) -- (709.93,668.68) -- cycle ;
\draw  [fill={rgb, 255:red, 0; green, 0; blue, 0 }  ,fill opacity=1 ] (723.57,642.43) -- (712.11,630.99) -- (727.96,634.29) -- (718.93,634.68) -- cycle ;
\draw  [fill={rgb, 255:red, 0; green, 0; blue, 0 }  ,fill opacity=1 ] (746.93,619.49) -- (739.58,605.06) -- (753.64,613.1) -- (744.93,610.68) -- cycle ;
\draw  [fill={rgb, 255:red, 0; green, 0; blue, 0 }  ,fill opacity=1 ] (621.56,737.7) -- (618.24,721.85) -- (629.7,733.3) -- (621.93,728.68) -- cycle ;
\draw  [fill={rgb, 255:red, 0; green, 0; blue, 0 }  ,fill opacity=1 ] (602.54,765.57) -- (595.84,750.82) -- (609.52,759.49) -- (600.93,756.68) -- cycle ;
\draw  [fill={rgb, 255:red, 0; green, 0; blue, 0 }  ,fill opacity=1 ] (589.07,791.71) -- (578.35,779.57) -- (593.97,783.86) -- (584.93,783.68) -- cycle ;
\draw  [fill={rgb, 255:red, 0; green, 0; blue, 0 }  ,fill opacity=1 ] (571.78,819.56) -- (558.59,810.17) -- (574.77,810.81) -- (565.93,812.68) -- cycle ;
\draw  [fill={rgb, 255:red, 0; green, 0; blue, 0 }  ,fill opacity=1 ] (462.83,782.46) -- (462.62,766.27) -- (471.67,779.7) -- (464.93,773.68) -- cycle ;
\draw  [fill={rgb, 255:red, 0; green, 0; blue, 0 }  ,fill opacity=1 ] (438.56,801.7) -- (435.24,785.85) -- (446.7,797.3) -- (438.93,792.68) -- cycle ;
\draw  [fill={rgb, 255:red, 0; green, 0; blue, 0 }  ,fill opacity=1 ] (411.98,815.48) -- (404.55,801.09) -- (418.65,809.06) -- (409.93,806.68) -- cycle ;
\draw  [fill={rgb, 255:red, 0; green, 0; blue, 0 }  ,fill opacity=1 ] (387.43,838.51) -- (376.17,826.87) -- (391.97,830.45) -- (382.93,830.68) -- cycle ;
\draw  [fill={rgb, 255:red, 0; green, 0; blue, 0 }  ,fill opacity=1 ] (300.5,735.55) -- (304.75,719.92) -- (309.75,735.32) -- (304.93,727.68) -- cycle ;
\draw  [fill={rgb, 255:red, 0; green, 0; blue, 0 }  ,fill opacity=1 ] (271.3,736.32) -- (272.07,720.14) -- (280.29,734.1) -- (273.93,727.68) -- cycle ;
\draw  [fill={rgb, 255:red, 0; green, 0; blue, 0 }  ,fill opacity=1 ] (247.31,751.7) -- (242.68,736.18) -- (255.06,746.63) -- (246.93,742.68) -- cycle ;
\draw  [fill={rgb, 255:red, 0; green, 0; blue, 0 }  ,fill opacity=1 ] (217.77,767.52) -- (210.69,752.96) -- (224.59,761.27) -- (215.93,758.68) -- cycle ;
\draw  [fill={rgb, 255:red, 0; green, 0; blue, 0 }  ,fill opacity=1 ] (211.79,590.58) -- (224.22,580.21) -- (219.5,595.7) -- (219.93,586.68) -- cycle ;
\draw  [fill={rgb, 255:red, 0; green, 0; blue, 0 }  ,fill opacity=1 ] (243.26,602.19) -- (257.23,594) -- (250.02,608.51) -- (251.93,599.68) -- cycle ;
\draw  [fill={rgb, 255:red, 0; green, 0; blue, 0 }  ,fill opacity=1 ] (267.91,626.21) -- (283.79,623.05) -- (272.24,634.39) -- (276.93,626.68) -- cycle ;
\draw  [fill={rgb, 255:red, 0; green, 0; blue, 0 }  ,fill opacity=1 ] (279.38,657.77) -- (295.52,659.05) -- (281.31,666.82) -- (287.93,660.68) -- cycle ;
\draw  [fill={rgb, 255:red, 0; green, 0; blue, 0 }  ,fill opacity=1 ] (192.23,394.08) -- (206.3,386.07) -- (198.91,400.48) -- (200.93,391.68) -- cycle ;
\draw  [fill={rgb, 255:red, 0; green, 0; blue, 0 }  ,fill opacity=1 ] (205.9,424.76) -- (221.56,420.64) -- (210.71,432.66) -- (214.93,424.68) -- cycle ;
\draw  [fill={rgb, 255:red, 0; green, 0; blue, 0 }  ,fill opacity=1 ] (220.19,454.39) -- (236.39,454.52) -- (222.77,463.28) -- (228.93,456.68) -- cycle ;
\draw  [fill={rgb, 255:red, 0; green, 0; blue, 0 }  ,fill opacity=1 ] (224.26,484.9) -- (239.69,489.82) -- (224.09,494.16) -- (231.93,489.68) -- cycle ;
\draw  [fill={rgb, 255:red, 0; green, 0; blue, 0 }  ,fill opacity=1 ] (274.94,217.81) -- (290.95,215.36) -- (278.9,226.18) -- (283.93,218.68) -- cycle ;
\draw  [fill={rgb, 255:red, 0; green, 0; blue, 0 }  ,fill opacity=1 ] (281.4,249.72) -- (297.53,251.1) -- (283.28,258.78) -- (289.93,252.68) -- cycle ;
\draw  [fill={rgb, 255:red, 0; green, 0; blue, 0 }  ,fill opacity=1 ] (280.27,282.9) -- (295.69,287.82) -- (280.09,292.15) -- (287.93,287.68) -- cycle ;
\draw  [fill={rgb, 255:red, 0; green, 0; blue, 0 }  ,fill opacity=1 ] (284.31,313.61) -- (297.2,323.41) -- (281.04,322.27) -- (289.93,320.68) -- cycle ;
\draw  [fill={rgb, 255:red, 0; green, 0; blue, 0 }  ,fill opacity=1 ] (415.35,113.85) -- (431.51,114.99) -- (417.37,122.88) -- (423.93,116.68) -- cycle ;
\draw  [fill={rgb, 255:red, 0; green, 0; blue, 0 }  ,fill opacity=1 ] (422.51,140.52) -- (437.68,146.21) -- (421.87,149.75) -- (429.93,145.68) -- cycle ;
\draw  [fill={rgb, 255:red, 0; green, 0; blue, 0 }  ,fill opacity=1 ] (398.39,160.54) -- (411.16,170.5) -- (395.02,169.16) -- (403.93,167.68) -- cycle ;
\draw  [fill={rgb, 255:red, 0; green, 0; blue, 0 }  ,fill opacity=1 ] (381.09,174.5) -- (391.37,187.02) -- (375.91,182.17) -- (384.93,182.68) -- cycle ;

\draw (490,30) node [anchor=north west][inner sep=0.75pt]    {$I$};
\draw (710,102) node [anchor=north west][inner sep=0.75pt]  {$( 1,2)( 3,4)$};
\draw (860,252) node [anchor=north west][inner sep=0.75pt]   {$( 1,3)( 2,4)$};
\draw (915,475) node [anchor=north west][inner sep=0.75pt]   {$( 1,4)( 2,3)$};
\draw (860,694) node [anchor=north west][inner sep=0.75pt]    {$( 1,2,3)$};
\draw (702,850) node [anchor=north west][inner sep=0.75pt]   {$( 4,2,1)$};
\draw (415,917.57) node [anchor=north west][inner sep=0.75pt]    {$( 2,4,3)$};
\draw (110,850) node [anchor=north west][inner sep=0.75pt]   {$( 3,4,1)$};
\draw (-45,694) node [anchor=north west][inner sep=0.75pt]  {$( 1,3,2)$};
\draw (-100,475) node [anchor=north west][inner sep=0.75pt]    {$( 4,1,2)$};
\draw (-45,252) node [anchor=north west][inner sep=0.75pt]  {$( 2,3,4)$};
\draw (110,102) node [anchor=north west][inner sep=0.75pt]  {$( 3,1,4)$};
\end{tikzpicture}

\caption{ The oriented graph $\Cay(A_4,\{ (1,2,3), (4,2,1), (2,4,3), (3,4,1)\})$}\label{Fig1}
\end{figure}

In the following example, we illustrate an use of Theorem~\ref{NorOriCayGraphIntegCh5}.

\begin{ex} \normalfont Consider $S=\{ (1,2,3), (4,2,1), (2,4,3), (3,4,1) \}$ in the alternating group $A_4$. The conjugacy classes of $A_4$ are $\{I\}, \Cl((1,2)(3,4)) ,  \Cl((1,2,3))$ and $\Cl((1,3,2)) $, where
\begin{align*}
&~ I = (1)(2)(3)(4),\\
&\Cl((1,2)(3,4))= \{(1,2)(3,4),(1,3)(2,4), (1,4)(2,3)\},\\
&\Cl((1,2,3))=\{ (1,2,3), (4,2,1), (2,4,3), (3,4,1)\} \text{ and}\\
&\Cl((1,3,2))=\{(1,3,2),(4,1,2),(2,3,4), (3,1,4)\}.
\end{align*}
The normal oriented Cayley graph $\Cay(A_4, S)$ is shown in Figure~\ref{Fig1}. We see that $S= \langle\!\langle (1,2,3) \rangle\!\rangle \cup \langle\!\langle (4,2,1)\rangle\!\rangle \cup \langle\!\langle (2,4,3) \rangle\!\rangle \cup \langle\!\langle (3,4,1)\rangle\!\rangle= \Cl((1,2,3))$. Therefore $S \in \mathbb{E}(\Gamma)$, and hence $\Cay(A_4, S)$ is HS-integral by Theorem~\ref{NorOriCayGraphIntegCh5}. The character table of the group $A_4$ is given in Table~\ref{table1Ch5}~\cite{james2001representations}, where $\Irr(A_4)=\{ \chi_1, \chi_2,\chi_3, \chi_4 \}$. Further, using Corollary~\ref{coro2EigNorCayMix}, the  HS-spectrum of  $\Cay(A_4, S)$ is obtained as $\{ [\mu_{1}]^{1},[\mu_{2}]^{1},[\mu_{3}]^{1}, [\mu_{4}]^{9} \},$ where $\mu_1= 4(\omega_6+\omega_6^5)=4$, $\mu_2= 4(\omega_6\omega_3+\omega_6^5\omega_3^2)=-8$, $\mu_3= 4(\omega_6\omega_3^2+\omega_6^5\omega_3)=4$ and $\mu_4=0$.  
\end{ex}

\begin{table}
\begin{center}
\begin{tabular}{ c c c c c }
	\hline
	& $I$ & $\Cl((1,2)(3,4))$ &$\Cl((1,2,3))$ & $\Cl((1,3,2))$ \\
	\hline
	$\chi_1$   & $1$    &$~~1$  &$1$ & $1$\\
	$\chi_2$ &  $1$  & $~~1$  &$\omega_3$ &$\omega_3^2$\\
	$\chi_3$ &$1$ & $~~1$  &$\omega_3^2$&$\omega_3$\\
	$\chi_4$ &$3$ & $-1$  &$0$ &$0$\\
	\hline
\end{tabular}
\caption{Character table of $A_4$}\label{table1Ch5}
\end{center}
\end{table}

\section{HS-integral normal mixed Cayley graphs}\label{hs-mixed-integral}

In this section, we extend Theorem~\ref{NorOriCayGraphIntegCh5} to normal mixed Cayley graphs.

\begin{lema}\label{CharaNewIntegSumCh5}  Let $S$ be a skew-symmetric subset of a finite group $\Gamma$ and $\Irr(\Gamma)=\{ \chi_1,\ldots,\chi_h \}$. Let $S$ be expressible as a union of some conjugacy classes of $\Gamma$ and $t(\neq 0) \in \mathbb{Q}$. If $$\frac{1}{\chi_j(\mathbf 1)}\sum\limits_{s\in S} \i t \sqrt{3} \left(\chi_j(s)- \chi_j(s^{-1})\right)$$ is an integer for each $j \in \{ 1,\ldots , h \}$, then $S \in \mathbb{E}(\Gamma)$.
\end{lema}
\begin{proof}
	Let $x=\sum\limits_{g\in \Gamma} c_g g \in \mathbb{Q}(\omega_3)\Gamma$, where
	$$c_g= \left\{ \begin{array}{cl}
		\i t\sqrt{3} & \mbox{if }  g\in S \\
		-\i t\sqrt{3} & \mbox{if } g\in S^{-1}\\ 
		0 &  \mbox{otherwise}. 
	\end{array}\right.	$$ 
Note that  $\frac{\chi_j(x)}{\chi_j(\mathbf 1)}= \frac{1}{\chi_j(\mathbf 1)}\sum\limits_{s\in S} \i t\sqrt{3} \left( \chi_j(s)- \chi_j(s^{-1})\right)$. Assume that $\frac{\chi_j(x)}{\chi_j(\mathbf 1)}$ is an integer for each \linebreak[4] $j \in \{ 1,\ldots , h \}$. Therefore,  all the three conditions of Theorem \ref{MainTheoremIntCharaCh5} are satisfied for $x$. Using the fact that $g \sim g^{-1}$, and conditions (ii) and (iii) of Theorem \ref{MainTheoremIntCharaCh5}, we get $\Im \left(\sum\limits_{s\in \Cl(g)}c_s \right)=0$ for all $g \in \Gamma\setminus \Gamma (3)$, and so we must have $S \cup S^{-1} \subseteq \Gamma(3)$. Again, let  $g_1 \in S$, $g_2 \in \Gamma(3)$ and $ g_1 \simeq g_2$. The first condition of Theorem \ref{MainTheoremIntCharaCh5} gives $$\sum\limits_{s\in \Cl(g_1)}c_s=\sum\limits_{s\in \Cl(g_2)}c_s.$$ Note that $\sum\limits_{s\in \Cl(g_1)}c_s=\i t\sqrt{3}|\Cl(g_1)|$. Therefore $\sum\limits_{s\in \Cl(g_2)}c_s=\i t\sqrt{3}|\Cl(g_1)|$, and so $g_2\in S$. Thus $g_1 \in S$ implies $\langle\!\langle g_1 \rangle\!\rangle \subseteq S$. Hence $S\in \mathbb{E}(\Gamma)$. 
\end{proof}

In ~\cite{kadyan2021hsIntegralAbelian}, the authers proved that if $\Gamma$ is an abelian group, then $\langle\!\langle x \rangle\!\rangle \cup \langle\!\langle x^{-1} \rangle\!\rangle =[x]$ for each $x \in \Gamma(3)$. Note that this result and its proof also hold good for non-abelian group. In the subsequent discussion, we use this fact for non-abelian group. 

\begin{lema}\label{Sqrt3NecessIntSumCh5}  Let $S$ be a skew-symmetric subset of a finite group $\Gamma$ and $\Irr(\Gamma)=\{ \chi_1,\ldots,\chi_h \}$. Let $S$ be expressible as a union of some conjugacy classes of $\Gamma$ and $t(\neq 0) \in \mathbb{Q}$. If $$ \frac{1}{\chi_j(\mathbf 1)}\sum\limits_{s\in S} \i t\sqrt{3} \left(\chi_j(s)- \chi_j(s^{-1})\right)$$ is an integer for each $j \in \{ 1,\ldots , h \}$,  then $ \frac{1}{\chi_j(\mathbf 1)}\sum\limits_{s\in S\cup S^{-1}} \chi_j(s) $ is also an integer for each $j \in \{ 1,\ldots , h \}$.
\end{lema}
\begin{proof}
Assume that $\frac{1}{\chi_j(\mathbf 1)}\sum\limits_{s\in S} \i t\sqrt{3} \left(\chi_j(s)- \chi_j(s^{-1})\right)$ is an integer for each $j \in \{ 1,\ldots , h \}$. By Lemma \ref{CharaNewIntegSumCh5} we have $S \in \mathbb{E}(\Gamma)$, and so $S=\langle\!\langle x_1 \rangle\!\rangle\cup\cdots \cup \langle\!\langle x_k \rangle\!\rangle$ for some $x_1,\ldots,x_k\in \Gamma(3)$. Therefore, we get 
\[S \cup S^{-1}=\left(\langle\!\langle x_1 \rangle\!\rangle\cup\cdots \cup \langle\!\langle x_k \rangle\!\rangle\right)\cup \left(\langle\!\langle x_1^{-1} \rangle\!\rangle\cup\cdots \cup \langle\!\langle x_k^{-1} \rangle\!\rangle\right) =[x_1] \cup \cdots\cup[x_k] \in \mathbb{B}(\Gamma).\]
Thus by Theorem~\ref{NorMixCayGraphInteg}, $\Cay(\Gamma, S \cup S^{-1})$ is integral, that is, $ \frac{1}{\chi_j(\mathbf 1)}\sum\limits_{s\in S\cup S^{-1}} \chi_j(s) $ is an integer for each  $j \in \{ 1,\ldots , h \}$. 
\end{proof}

In the next result, we use the fact that the HS-eigenvalues of a mixed Cayley graph are algebraic integers. See Theorem 2.6 of \cite{li2022hermitian} for details.

\begin{lema}\label{SeperatIntegMixedGraphCh5}
If $\Gamma$ is a finite group, then the normal mixed Cayley graph $\Cay(\Gamma,S)$ is HS-integral if and only if  $\Cay(\Gamma,{S\setminus \overline{S}})$ is integral (or HS-integral) and $\Cay(\Gamma, {\overline{S}})$ is HS-integral.
\end{lema}
\begin{proof}
	Let $\Irr(\Gamma)=\{ \chi_1,\cdots,\chi_h \}$. By Lemma~\ref{EigNorCayMix}, the HS-spectrum of the normal mixed Cayley graph $\Cay(\Gamma,S)$ is $\{ [\gamma_{1}]^{d_1^2},\ldots, [\gamma_{h}]^{d_h^2} \},$ where $\gamma_j=\lambda_j + \mu_j$, $$\lambda_j= \frac{1}{\chi_j(\mathbf 1)} \sum_{s \in S \setminus \overline{S}} \chi_j(s),\hspace{0.2cm} \mu_j=\frac{1}{\chi_j(\mathbf 1)}  \sum_{s \in \overline{S}}(\omega_6 \chi_j(s)+\omega_6^5 \chi_j(s^{-1})),$$ and $d_j=\chi_j(\mathbf 1)$ for each $j\in \{1,\ldots,h\}$.
	Note that $\{ [\lambda_{1}]^{d_1^2},\ldots, [\lambda_{h}]^{d_h^2} \}$ is the spectrum of $\Cay(\Gamma, S\setminus \overline{S})$ and $\{ [\mu_{1}]^{d_1^2},\ldots, [\mu_{h}]^{d_h^2} \}$ is the HS-spectrum of $\Cay(\Gamma,\overline{S})$.
	
	Assume that the mixed Cayley graph $\Cay(\Gamma,S)$ is HS-integral. Let $j \in \{ 1,\ldots,h\}$. By Lemma~\ref{newLemmaConjugateChara}, there exists $k \in \{1,\ldots,h\}$ such that $\chi_{k}=\overline{\chi}_{j}$. Therefore, $\chi_j(\mathbf 1)=\chi_k( \mathbf 1)$ and $$\lambda_j = \frac{1}{\chi_j(\mathbf 1)} \sum_{s \in S \setminus \overline{S}} \chi_j(s^{-1})=  \frac{1}{\chi_j(\mathbf 1)} \sum_{s \in S \setminus \overline{S}} \overline{\chi_j(s)} = \frac{1}{\chi_k(\mathbf 1)} \sum_{s \in S \setminus \overline{S}} \chi_k(s) =\lambda_k.$$
Now we have
	\begin{align*}
	\gamma_j - \gamma_k &= \frac{1}{\chi_j(\mathbf 1)}  \sum_{s \in \overline{S}}\left( \omega_6 \chi_j(s)+\omega_6^5 \chi_j(s^{-1})\right) - \frac{1}{\chi_k(\mathbf 1)}  \sum_{s \in \overline{S}}\left(\omega_6 \chi_k(s)+\omega_6^5 \chi_k(s^{-1})\right)\\
	&=  \frac{1}{\chi_j(\mathbf 1)}  \sum_{s \in \overline{S}}\left(\omega_6 \chi_j(s)+\omega_6^5 \chi_j(s^{-1})\right) - \frac{1}{\chi_j(\mathbf 1)}  \sum_{s \in \overline{S}}\left(\omega_6 \overline{\chi_{j}(s)}+\omega_6^5 \overline{\chi_{j}(s^{-1})}\right)\\
	&=  \frac{1}{\chi_j(\mathbf 1)}  \sum_{s \in \overline{S}}\left(\omega_6 \chi_j(s)+\omega_6^5 \chi_j(s^{-1})\right) - \frac{1}{\chi_j(\mathbf 1)}  \sum_{s \in \overline{S}}\left(\omega_6 \chi_{j}(s^{-1})+\omega_6^5 \chi_{j}(s)\right)\\
	&=  \frac{1}{\chi_j(\mathbf 1)}  \sum_{s \in \overline{S}} \left( (\omega_6-\omega_6^5) \chi_j(s)+(\omega_6^5- \omega_6) \chi_j(s^{-1}) \right)\\
	&=  \frac{1}{\chi_j(\mathbf 1)}  \sum_{s \in \overline{S}} \i\sqrt{3} \left( \chi_j(s) - \chi_j(s^{-1})\right).
	\end{align*}
By assumption $\gamma_{j}, \gamma_{k} \in \mathbb{Z}$, and so $ \frac{1}{\chi_j(\mathbf 1)}  \sum\limits_{s \in \overline{S}} \i\sqrt{3} \left(\chi_j(s) - \chi_j(s^{-1})\right) \in \mathbb{Z}$ for each  $j \in \{ 1,\ldots , h \}$. Therefore by Lemma \ref{Sqrt3NecessIntSumCh5}, we get $\frac{1}{\chi_j(\mathbf 1)}\sum\limits_{ s \in \overline{S} \cup \overline{S}^{-1}} \chi_j(s) \in \mathbb{Z}$ for each $j \in \{ 1,\ldots , h \}$. Since 
	\begin{align*}
		\mu_{j}= \frac{1}{2\chi_j(\mathbf 1)} \sum\limits_{ s \in \overline{S} \cup \overline{S}^{-1}}\chi_j(s) + \frac{1}{2\chi_j(\mathbf 1)} \sum\limits_{s\in \overline{S}} \i\sqrt{3} \left(\chi_j(s) -\chi_j(s^{-1})\right),
	\end{align*} 
$\mu_j$ is a rational algebraic integer, and hence it is an integer for each $j \in \{ 1,\ldots , h \}$. Thus $\Cay(\Gamma,\overline{S})$ is HS-integral. Now we have $\gamma_{j}, \mu_{j} \in \mathbb{Z}$, and so $\lambda_{j} = \gamma_{j} -\mu_{j} \in \mathbb{Z}$ for each  $j \in \{ 1,\ldots , h \}$. Hence $\Cay(\Gamma,S\setminus \overline{S})$ is also integral.
	
Conversely, assume that $\Cay(\Gamma,S\setminus \overline{S})$ is integral and $\Cay(\Gamma, \overline{S})$ is HS-integral. Then Lemma~\ref{EigNorCayMix} implies that $\Cay(\Gamma,S)$ is HS-integral.
\end{proof}

\begin{theorem}\label{MixedHSintegralChara}
Let $\Gamma$ be a finite group and $\Cay(\Gamma, S)$ be a normal mixed Cayley graph. Then $\Cay(\Gamma, S)$ is HS-integral if and only if $S\setminus \overline{S} \in \mathbb{B}(\Gamma)$ and $\overline{S} \in \mathbb{E}(\Gamma)$.
\end{theorem}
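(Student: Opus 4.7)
The plan is to invoke Lemma \ref{SeperatIntegMixedGraph} to reduce the HS-integrality question for $\text{Cay}(\Gamma, S)$ to the HS-integrality of its two pieces, namely the simple normal Cayley graph $\text{Cay}(\Gamma, S\setminus \overline{S})$ and the oriented normal Cayley graph $\text{Cay}(\Gamma, \overline{S})$. By construction, $S\setminus \overline{S}$ is symmetric and $\overline{S}$ is skew-symmetric, so these two subgraphs fall exactly into the two regimes already classified earlier in the paper.

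First I would verify that both $S\setminus \overline{S}$ and $\overline{S}$ remain unions of conjugacy classes of $\Gamma$ whenever $S$ is. This is because conjugation commutes with inversion: if $g\in S$ and $g^{-1}\notin S$, then for any $x\in \Gamma$ we have $xgx^{-1}\in S$ by normality of $S$, while $(xgx^{-1})^{-1}=xg^{-1}x^{-1}\notin S$ for the same reason. Hence $\overline{S}$ is a union of conjugacy classes, and consequently so is $S\setminus \overline{S}$. This ensures that the two pieces really do qualify as \emph{normal} Cayley graphs so that the earlier characterisations apply.

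With this in place, the argument is a direct two-sided application of existing results. I would apply Theorem \ref{NorMixCayGraphInteg} to the normal simple Cayley graph $\text{Cay}(\Gamma, S\setminus \overline{S})$ to obtain that it is HS-integral if and only if $S\setminus \overline{S}\in \mathbb{B}(\Gamma)$, and apply Theorem \ref{NorOriCayGraphInteg} to the normal oriented Cayley graph $\text{Cay}(\Gamma, \overline{S})$ to obtain that it is HS-integral if and only if $\overline{S}\in \mathbb{E}(\Gamma)$ (identifying the symbol $\mathbb{D}(\Gamma)$ in the statement with $\mathbb{E}(\Gamma)$, which is precisely the family defined earlier for the skew-symmetric case). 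Chaining these equivalences through Lemma \ref{SeperatIntegMixedGraph} then yields the claimed characterisation.

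There is no real obstacle here: all the hard technical work, both the spectral decomposition $\gamma_j=\lambda_j+\mu_j$ from Lemma \ref{EigNorCayMix} and the delicate Galois-theoretic arguments behind Theorem \ref{MainTheoremIntChara}, has already been carried out, so this final theorem is essentially a clean corollary of the separation lemma combined with the two classification theorems. The only subtlety worth highlighting in writing up the proof is the normality inheritance noted above, since it is what legitimises applying the normal-case results to each piece separately.
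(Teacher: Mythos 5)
Your proposal is correct and follows exactly the paper's own route: Lemma \ref{SeperatIntegMixedGraph} to split the problem, then Theorem \ref{NorMixCayGraphInteg} for $S\setminus\overline{S}$ and Theorem \ref{NorOriCayGraphInteg} for $\overline{S}$. Your explicit check that normality passes to the two pieces, and your identification of $\mathbb{D}(\Gamma)$ with the previously defined $\mathbb{E}(\Gamma)$ (a notational slip in the statement), are both sound and in fact slightly more careful than the paper's two-line proof.
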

\begin{proof}
By Lemma \ref{SeperatIntegMixedGraphCh5}, $\Cay(\Gamma, S)$ is HS-integral if and only if $\Cay(\Gamma, S \setminus \overline{S})$ is integral and $\Cay(\Gamma, \overline{S})$ is HS-integral. Now the proof follows from Theorem~\ref{NorMixCayGraphInteg} and Theorem~\ref{NorOriCayGraphIntegCh5}.
\end{proof}

\begin{figure}[ht]
\centering
\tikzset{every picture/.style={line width=0.75pt}} 

\begin{tikzpicture}[x=0.2pt,y=0.2pt,yscale=-1,xscale=1]

\draw    (841.32,693.15) -- (509.58,109.42) ;
\draw    (699.58,840.42) -- (509.58,109.42) ;
\draw    (510.58,890.42) -- (509.58,109.42) ;
\draw    (318.58,843.42) -- (509.58,109.42) ;
\draw    (159.32,696.15) -- (487.6,100.86) ;
\draw    (105.58,488.42) -- (487.6,100.86) ;
\draw    (167.58,285.42) -- (487.6,100.86) ;
\draw    (841.32,693.15) -- (694.32,164.58) ;
\draw    (699.58,840.42) -- (694.32,164.58) ;
\draw    (510.58,890.42) -- (694.32,164.58) ;
\draw    (318.58,843.42) -- (694.32,164.58) ;
\draw    (319.58,142.42) -- (681.6,151.86) ;
\draw    (167.58,285.42) -- (681.6,151.86) ;
\draw    (105.58,488.42) -- (681.6,151.86) ;
\draw    (159.32,696.15) -- (681.6,151.86) ;
\draw  [fill={rgb, 255:red, 0; green, 0; blue, 0 }  ,fill opacity=1 ] (485.48,222.16) -- (476.84,235.86) -- (476.57,219.67) -- (478.93,228.39) -- cycle ;
\draw    (841.32,693.15) -- (842.32,308.58) ;
\draw    (699.58,840.42) -- (842.32,308.58) ;
\draw    (510.58,890.42) -- (842.32,308.58) ;
\draw    (318.58,843.42) -- (842.32,308.58) ;
\draw    (699.58,840.42) -- (892.58,509.42) ;
\draw    (510.58,890.42) -- (892.58,509.42) ;
\draw    (318.58,843.42) -- (892.58,509.42) ;
\draw    (319.58,142.42) -- (829.6,295.86) ;
\draw    (167.58,285.42) -- (829.6,295.86) ;
\draw    (105.58,488.42) -- (829.6,295.86) ;
\draw    (159.32,696.15) -- (829.6,295.86) ;
\draw    (319.58,142.42) -- (888.58,493.42) ;
\draw    (167.58,285.42) -- (888.58,493.42) ;
\draw    (105.58,488.42) -- (888.58,493.42) ;
\draw    (159.32,696.15) -- (888.58,493.42) ;
\draw    (314.58,158.42) -- (831.58,701.42) ;
\draw    (163.58,303.42) -- (831.58,701.42) ;
\draw    (112.58,506.42) -- (831.58,701.42) ;
\draw    (170.58,715.42) -- (831.58,701.42) ;
\draw    (314.58,158.42) -- (682.58,841.42) ;
\draw    (163.58,303.42) -- (682.58,841.42) ;
\draw    (112.58,506.42) -- (682.58,841.42) ;
\draw    (170.58,715.42) -- (682.58,841.42) ;
\draw    (314.58,158.42) -- (493.58,890.42) ;
\draw    (163.58,303.42) -- (493.58,890.42) ;
\draw    (112.58,506.42) -- (493.58,890.42) ;
\draw    (170.58,715.42) -- (493.58,890.42) ;
\draw    (314.58,158.42) -- (302.58,840.42) ;
\draw    (163.58,303.42) -- (302.58,840.42) ;
\draw    (112.58,506.42) -- (302.58,840.42) ;
\draw  [fill={rgb, 255:red, 0; green, 0; blue, 0 }  ,fill opacity=1 ] (514.68,227.7) -- (509.81,243.15) -- (505.43,227.56) -- (509.93,235.39) -- cycle ;
\draw  [fill={rgb, 255:red, 0; green, 0; blue, 0 }  ,fill opacity=1 ] (540.93,213.58) -- (541.34,229.77) -- (532.13,216.45) -- (538.93,222.39) -- cycle ;
\draw  [fill={rgb, 255:red, 0; green, 0; blue, 0 }  ,fill opacity=1 ] (562.15,193.36) -- (565.75,209.15) -- (554.09,197.91) -- (561.93,202.39) -- cycle ;
\draw  [fill={rgb, 255:red, 0; green, 0; blue, 0 }  ,fill opacity=1 ] (644.72,263.8) -- (633.21,275.2) -- (636.6,259.36) -- (636.93,268.39) -- cycle ;
\draw  [fill={rgb, 255:red, 0; green, 0; blue, 0 }  ,fill opacity=1 ] (670.32,278.99) -- (662.04,292.91) -- (661.34,276.73) -- (663.93,285.39) -- cycle ;
\draw  [fill={rgb, 255:red, 0; green, 0; blue, 0 }  ,fill opacity=1 ] (700.04,272.93) -- (694.45,288.13) -- (690.8,272.35) -- (694.93,280.39) -- cycle ;
\draw  [fill={rgb, 255:red, 0; green, 0; blue, 0 }  ,fill opacity=1 ] (728.9,270.85) -- (727.5,286.99) -- (719.83,272.73) -- (725.93,279.39) -- cycle ;
\draw  [fill={rgb, 255:red, 0; green, 0; blue, 0 }  ,fill opacity=1 ] (774.43,384.31) -- (761.02,393.39) -- (767.27,378.45) -- (765.93,387.39) -- cycle ;
\draw  [fill={rgb, 255:red, 0; green, 0; blue, 0 }  ,fill opacity=1 ] (791.1,408.53) -- (778.61,418.83) -- (783.42,403.37) -- (782.93,412.39) -- cycle ;
\draw  [fill={rgb, 255:red, 0; green, 0; blue, 0 }  ,fill opacity=1 ] (815.69,426.39) -- (806.59,439.79) -- (806.86,423.59) -- (808.93,432.39) -- cycle ;
\draw  [fill={rgb, 255:red, 0; green, 0; blue, 0 }  ,fill opacity=1 ] (846.56,428.63) -- (841.93,444.15) -- (837.31,428.63) -- (841.93,436.39) -- cycle ;
\draw  [fill={rgb, 255:red, 0; green, 0; blue, 0 }  ,fill opacity=1 ] (823.97,554.5) -- (808.22,558.28) -- (819.33,546.49) -- (814.93,554.39) -- cycle ;
\draw  [fill={rgb, 255:red, 0; green, 0; blue, 0 }  ,fill opacity=1 ] (829.85,578.92) -- (815,585.39) -- (823.88,571.85) -- (820.93,580.39) -- cycle ;
\draw  [fill={rgb, 255:red, 0; green, 0; blue, 0 }  ,fill opacity=1 ] (839.01,610.63) -- (826.76,621.21) -- (831.22,605.65) -- (830.93,614.68) -- cycle ;
\draw  [fill={rgb, 255:red, 0; green, 0; blue, 0 }  ,fill opacity=1 ] (894.59,593.57) -- (885.71,607.11) -- (885.73,590.91) -- (887.93,599.68) -- cycle ;
\draw  [fill={rgb, 255:red, 0; green, 0; blue, 0 }  ,fill opacity=1 ] (724.04,707.66) -- (708.2,704.3) -- (723.29,698.43) -- (715.93,703.68) -- cycle ;
\draw  [fill={rgb, 255:red, 0; green, 0; blue, 0 }  ,fill opacity=1 ] (716.51,674.87) -- (702.36,666.99) -- (718.51,665.84) -- (709.93,668.68) -- cycle ;
\draw  [fill={rgb, 255:red, 0; green, 0; blue, 0 }  ,fill opacity=1 ] (723.57,642.43) -- (712.11,630.99) -- (727.96,634.29) -- (718.93,634.68) -- cycle ;
\draw  [fill={rgb, 255:red, 0; green, 0; blue, 0 }  ,fill opacity=1 ] (746.93,619.49) -- (739.58,605.06) -- (753.64,613.1) -- (744.93,610.68) -- cycle ;
\draw  [fill={rgb, 255:red, 0; green, 0; blue, 0 }  ,fill opacity=1 ] (621.56,737.7) -- (618.24,721.85) -- (629.7,733.3) -- (621.93,728.68) -- cycle ;
\draw  [fill={rgb, 255:red, 0; green, 0; blue, 0 }  ,fill opacity=1 ] (600.54,763.57) -- (593.84,748.82) -- (607.52,757.49) -- (598.93,754.68) -- cycle ;
\draw  [fill={rgb, 255:red, 0; green, 0; blue, 0 }  ,fill opacity=1 ] (588.07,791.71) -- (577.35,779.57) -- (592.97,783.86) -- (583.93,783.68) -- cycle ;
\draw  [fill={rgb, 255:red, 0; green, 0; blue, 0 }  ,fill opacity=1 ] (571.78,819.56) -- (558.59,810.17) -- (574.77,810.81) -- (565.93,812.68) -- cycle ;
\draw  [fill={rgb, 255:red, 0; green, 0; blue, 0 }  ,fill opacity=1 ] (462.83,782.46) -- (462.62,766.27) -- (471.67,779.7) -- (464.93,773.68) -- cycle ;
\draw  [fill={rgb, 255:red, 0; green, 0; blue, 0 }  ,fill opacity=1 ] (438.56,801.7) -- (435.24,785.85) -- (446.7,797.3) -- (438.93,792.68) -- cycle ;
\draw  [fill={rgb, 255:red, 0; green, 0; blue, 0 }  ,fill opacity=1 ] (411.98,815.48) -- (404.55,801.09) -- (418.65,809.06) -- (409.93,806.68) -- cycle ;
\draw  [fill={rgb, 255:red, 0; green, 0; blue, 0 }  ,fill opacity=1 ] (387.43,838.51) -- (376.17,826.87) -- (391.97,830.45) -- (382.93,830.68) -- cycle ;
\draw  [fill={rgb, 255:red, 0; green, 0; blue, 0 }  ,fill opacity=1 ] (300.5,735.55) -- (304.75,719.92) -- (309.75,735.32) -- (304.93,727.68) -- cycle ;
\draw  [fill={rgb, 255:red, 0; green, 0; blue, 0 }  ,fill opacity=1 ] (271.3,736.32) -- (272.07,720.14) -- (280.29,734.1) -- (273.93,727.68) -- cycle ;
\draw  [fill={rgb, 255:red, 0; green, 0; blue, 0 }  ,fill opacity=1 ] (247.31,751.7) -- (242.68,736.18) -- (255.06,746.63) -- (246.93,742.68) -- cycle ;
\draw  [fill={rgb, 255:red, 0; green, 0; blue, 0 }  ,fill opacity=1 ] (221.77,792.52) -- (214.69,777.96) -- (228.59,786.27) -- (219.93,783.68) -- cycle ;
\draw  [fill={rgb, 255:red, 0; green, 0; blue, 0 }  ,fill opacity=1 ] (211.79,590.58) -- (224.22,580.21) -- (219.5,595.7) -- (219.93,586.68) -- cycle ;
\draw  [fill={rgb, 255:red, 0; green, 0; blue, 0 }  ,fill opacity=1 ] (242.26,603.19) -- (256.23,595) -- (249.02,609.51) -- (250.93,600.68) -- cycle ;
\draw  [fill={rgb, 255:red, 0; green, 0; blue, 0 }  ,fill opacity=1 ] (267.91,626.21) -- (283.79,623.05) -- (272.24,634.39) -- (276.93,626.68) -- cycle ;
\draw  [fill={rgb, 255:red, 0; green, 0; blue, 0 }  ,fill opacity=1 ] (278.38,657.77) -- (294.52,659.05) -- (280.31,666.82) -- (286.93,660.68) -- cycle ;
\draw  [fill={rgb, 255:red, 0; green, 0; blue, 0 }  ,fill opacity=1 ] (192.23,395.08) -- (206.3,387.07) -- (198.91,401.48) -- (200.93,392.68) -- cycle ;
\draw  [fill={rgb, 255:red, 0; green, 0; blue, 0 }  ,fill opacity=1 ] (205.9,424.76) -- (221.56,420.64) -- (210.71,432.66) -- (214.93,424.68) -- cycle ;
\draw  [fill={rgb, 255:red, 0; green, 0; blue, 0 }  ,fill opacity=1 ] (208.19,456.39) -- (224.39,456.52) -- (210.77,465.28) -- (216.93,458.68) -- cycle ;
\draw  [fill={rgb, 255:red, 0; green, 0; blue, 0 }  ,fill opacity=1 ] (224.26,483.9) -- (239.69,488.82) -- (224.09,493.16) -- (231.93,488.68) -- cycle ;
\draw  [fill={rgb, 255:red, 0; green, 0; blue, 0 }  ,fill opacity=1 ] (280.94,213.81) -- (296.95,211.36) -- (284.9,222.18) -- (289.93,214.68) -- cycle ;
\draw  [fill={rgb, 255:red, 0; green, 0; blue, 0 }  ,fill opacity=1 ] (291.4,247.72) -- (307.53,249.1) -- (293.28,256.78) -- (299.93,250.68) -- cycle ;
\draw  [fill={rgb, 255:red, 0; green, 0; blue, 0 }  ,fill opacity=1 ] (283.27,282.9) -- (298.69,287.82) -- (283.09,292.15) -- (290.93,287.68) -- cycle ;
\draw  [fill={rgb, 255:red, 0; green, 0; blue, 0 }  ,fill opacity=1 ] (284.31,313.61) -- (297.2,323.41) -- (281.04,322.27) -- (289.93,320.68) -- cycle ;
\draw  [fill={rgb, 255:red, 0; green, 0; blue, 0 }  ,fill opacity=1 ] (391.21,111.34) -- (407.4,111.56) -- (393.73,120.24) -- (399.93,113.68) -- cycle ;
\draw  [fill={rgb, 255:red, 0; green, 0; blue, 0 }  ,fill opacity=1 ] (422.51,140.52) -- (437.68,146.21) -- (421.87,149.75) -- (429.93,145.68) -- cycle ;
\draw  [fill={rgb, 255:red, 0; green, 0; blue, 0 }  ,fill opacity=1 ] (397.39,160.54) -- (410.16,170.5) -- (394.02,169.16) -- (402.93,167.68) -- cycle ;
\draw  [fill={rgb, 255:red, 0; green, 0; blue, 0 }  ,fill opacity=1 ] (381.09,174.5) -- (391.37,187.02) -- (375.91,182.17) -- (384.93,182.68) -- cycle ;
\draw   (101.82,500.36) .. controls (101.82,279.73) and (280.69,100.86) .. (501.32,100.86) .. controls (721.96,100.86) and (900.82,279.73) .. (900.82,500.36) .. controls (900.82,721) and (721.96,899.86) .. (501.32,899.86) .. controls (280.69,899.86) and (101.82,721) .. (101.82,500.36) -- cycle ;
\draw  [color={rgb, 255:red, 0; green, 0; blue, 0 }  ,draw opacity=1 ][fill={rgb, 255:red, 255; green, 255; blue, 255 }  ,fill opacity=1 ][line width=0.75]  (145.6,292.86) .. controls (145.6,285.84) and (151.3,280.15) .. (158.32,280.15) .. controls (165.35,280.15) and (171.04,285.84) .. (171.04,292.86) .. controls (171.04,299.89) and (165.35,305.58) .. (158.32,305.58) .. controls (151.3,305.58) and (145.6,299.89) .. (145.6,292.86) -- cycle ;
\draw  [color={rgb, 255:red, 0; green, 0; blue, 0 }  ,draw opacity=1 ][fill={rgb, 255:red, 255; green, 255; blue, 255 }  ,fill opacity=1 ][line width=0.75]  (295.86,145.7) .. controls (295.86,138.67) and (301.56,132.98) .. (308.58,132.98) .. controls (315.61,132.98) and (321.3,138.67) .. (321.3,145.7) .. controls (321.3,152.72) and (315.61,158.42) .. (308.58,158.42) .. controls (301.56,158.42) and (295.86,152.72) .. (295.86,145.7) -- cycle ;
\draw  [color={rgb, 255:red, 0; green, 0; blue, 0 }  ,draw opacity=1 ][fill={rgb, 255:red, 255; green, 255; blue, 255 }  ,fill opacity=1 ][line width=0.75]  (487.6,100.86) .. controls (487.6,93.84) and (493.3,88.15) .. (500.32,88.15) .. controls (507.35,88.15) and (513.04,93.84) .. (513.04,100.86) .. controls (513.04,107.89) and (507.35,113.58) .. (500.32,113.58) .. controls (493.3,113.58) and (487.6,107.89) .. (487.6,100.86) -- cycle ;
\draw  [color={rgb, 255:red, 0; green, 0; blue, 0 }  ,draw opacity=1 ][fill={rgb, 255:red, 255; green, 255; blue, 255 }  ,fill opacity=1 ][line width=0.75]  (681.6,151.86) .. controls (681.6,144.84) and (687.3,139.15) .. (694.32,139.15) .. controls (701.35,139.15) and (707.04,144.84) .. (707.04,151.86) .. controls (707.04,158.89) and (701.35,164.58) .. (694.32,164.58) .. controls (687.3,164.58) and (681.6,158.89) .. (681.6,151.86) -- cycle ;
\draw  [color={rgb, 255:red, 0; green, 0; blue, 0 }  ,draw opacity=1 ][fill={rgb, 255:red, 255; green, 255; blue, 255 }  ,fill opacity=1 ][line width=0.75]  (829.6,295.86) .. controls (829.6,288.84) and (835.3,283.15) .. (842.32,283.15) .. controls (849.35,283.15) and (855.04,288.84) .. (855.04,295.86) .. controls (855.04,302.89) and (849.35,308.58) .. (842.32,308.58) .. controls (835.3,308.58) and (829.6,302.89) .. (829.6,295.86) -- cycle ;
\draw  [color={rgb, 255:red, 0; green, 0; blue, 0 }  ,draw opacity=1 ][fill={rgb, 255:red, 255; green, 255; blue, 255 }  ,fill opacity=1 ][line width=0.75]  (888.1,500.36) .. controls (888.1,493.34) and (893.8,487.65) .. (900.82,487.65) .. controls (907.85,487.65) and (913.54,493.34) .. (913.54,500.36) .. controls (913.54,507.39) and (907.85,513.08) .. (900.82,513.08) .. controls (893.8,513.08) and (888.1,507.39) .. (888.1,500.36) -- cycle ;
\draw  [color={rgb, 255:red, 0; green, 0; blue, 0 }  ,draw opacity=1 ][fill={rgb, 255:red, 255; green, 255; blue, 255 }  ,fill opacity=1 ][line width=0.75]  (830.58,704.42) .. controls (830.58,697.39) and (836.28,691.7) .. (843.3,691.7) .. controls (850.33,691.7) and (856.02,697.39) .. (856.02,704.42) .. controls (856.02,711.44) and (850.33,717.14) .. (843.3,717.14) .. controls (836.28,717.14) and (830.58,711.44) .. (830.58,704.42) -- cycle ;
\draw  [color={rgb, 255:red, 0; green, 0; blue, 0 }  ,draw opacity=1 ][fill={rgb, 255:red, 255; green, 255; blue, 255 }  ,fill opacity=1 ][line width=0.75]  (679.58,849.42) .. controls (679.58,842.39) and (685.28,836.7) .. (692.3,836.7) .. controls (699.33,836.7) and (705.02,842.39) .. (705.02,849.42) .. controls (705.02,856.44) and (699.33,862.14) .. (692.3,862.14) .. controls (685.28,862.14) and (679.58,856.44) .. (679.58,849.42) -- cycle ;
\draw  [color={rgb, 255:red, 0; green, 0; blue, 0 }  ,draw opacity=1 ][fill={rgb, 255:red, 255; green, 255; blue, 255 }  ,fill opacity=1 ][line width=0.75]  (488.6,899.86) .. controls (488.6,892.84) and (494.3,887.15) .. (501.32,887.15) .. controls (508.35,887.15) and (514.04,892.84) .. (514.04,899.86) .. controls (514.04,906.89) and (508.35,912.58) .. (501.32,912.58) .. controls (494.3,912.58) and (488.6,906.89) .. (488.6,899.86) -- cycle ;
\draw  [color={rgb, 255:red, 0; green, 0; blue, 0 }  ,draw opacity=1 ][fill={rgb, 255:red, 255; green, 255; blue, 255 }  ,fill opacity=1 ][line width=0.75]  (296.15,851.42) .. controls (296.15,844.39) and (301.84,838.7) .. (308.86,838.7) .. controls (315.89,838.7) and (321.58,844.39) .. (321.58,851.42) .. controls (321.58,858.44) and (315.89,864.14) .. (308.86,864.14) .. controls (301.84,864.14) and (296.15,858.44) .. (296.15,851.42) -- cycle ;
\draw  [color={rgb, 255:red, 0; green, 0; blue, 0 }  ,draw opacity=1 ][fill={rgb, 255:red, 255; green, 255; blue, 255 }  ,fill opacity=1 ][line width=0.75]  (146.6,708.86) .. controls (146.6,701.84) and (152.3,696.15) .. (159.32,696.15) .. controls (166.35,696.15) and (172.04,701.84) .. (172.04,708.86) .. controls (172.04,715.89) and (166.35,721.58) .. (159.32,721.58) .. controls (152.3,721.58) and (146.6,715.89) .. (146.6,708.86) -- cycle ;
\draw  [color={rgb, 255:red, 0; green, 0; blue, 0 }  ,draw opacity=1 ][fill={rgb, 255:red, 255; green, 255; blue, 255 }  ,fill opacity=1 ][line width=0.75]  (89.1,500.36) .. controls (89.1,493.34) and (94.8,487.65) .. (101.82,487.65) .. controls (108.85,487.65) and (114.54,493.34) .. (114.54,500.36) .. controls (114.54,507.39) and (108.85,513.08) .. (101.82,513.08) .. controls (94.8,513.08) and (89.1,507.39) .. (89.1,500.36) -- cycle ;
\draw    (829.6,295.86) -- (509.58,109.42) ;
\draw    (888.58,493.42) -- (509.58,109.42) ;
\draw    (888.58,493.42) -- (694.32,164.58) ;
\draw    (510.58,890.42) -- (831.58,701.42) ;
\draw    (318.58,843.42) -- (831.58,701.42) ;
\draw    (318.58,843.42) -- (682.58,841.42) ;
\draw    (159.32,696.15) -- (163.58,303.42) ;
\draw    (159.32,696.15) -- (314.58,158.42) ;
\draw    (105.58,488.42) -- (314.58,158.42) ;

\draw (490,30) node [anchor=north west][inner sep=0.75pt]    {$I$};
\draw (710,102) node [anchor=north west][inner sep=0.75pt]  {$( 1,2)( 3,4)$};
\draw (860,252) node [anchor=north west][inner sep=0.75pt]   {$( 1,3)( 2,4)$};
\draw (915,475) node [anchor=north west][inner sep=0.75pt]   {$( 1,4)( 2,3)$};
\draw (860,694) node [anchor=north west][inner sep=0.75pt]    {$( 1,2,3)$};
\draw (702,850) node [anchor=north west][inner sep=0.75pt]   {$( 4,2,1)$};
\draw (415,917.57) node [anchor=north west][inner sep=0.75pt]    {$( 2,4,3)$};
\draw (110,850) node [anchor=north west][inner sep=0.75pt]   {$( 3,4,1)$};
\draw (-45,694) node [anchor=north west][inner sep=0.75pt]  {$( 1,3,2)$};
\draw (-100,475) node [anchor=north west][inner sep=0.75pt]    {$( 4,1,2)$};
\draw (-45,252) node [anchor=north west][inner sep=0.75pt]  {$( 2,3,4)$};
\draw (110,102) node [anchor=north west][inner sep=0.75pt]  {$( 3,1,4)$};
\end{tikzpicture}
\caption{The mixed graph $\Cay(A_4,S)$}\label{Fig2}
\end{figure}
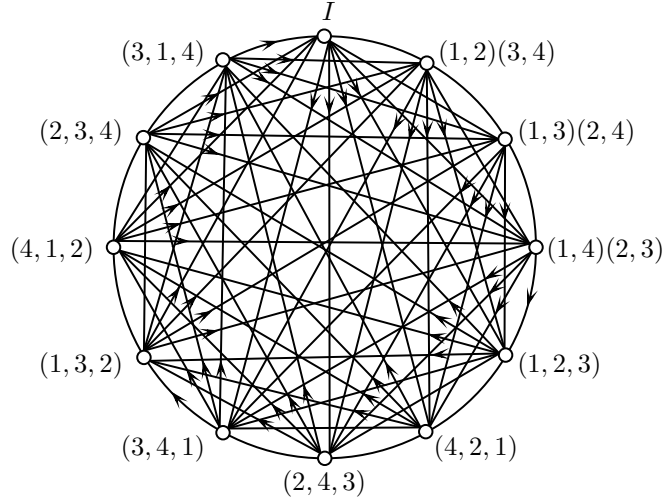

We give the following example to illustrate Theorem~\ref{MixedHSintegralChara}.

\begin{ex}\label{exampleMixed} \normalfont Consider $$S=\{ (1,2)(3,4),(1,3)(2,4), (1,4)(2,3), (1,2,3), (4,2,1), (2,4,3), (3,4,1) \}$$ in the alternating group $A_4$. The normal mixed Cayley graph $\Cay(A_4, S)$ is shown in Figure~\ref{Fig2}. We find that $$\overline{S}= \langle\!\langle (1,2,3) \rangle\!\rangle \cup \langle\!\langle (4,2,1)\rangle\!\rangle \cup \langle\!\langle (2,4,3) \rangle\!\rangle \cup \langle\!\langle (3,4,1)\rangle\!\rangle= \Cl((1,2,3)) \in \mathbb{E}(\Gamma)$$ and  $$S \setminus \overline{S}= [(1,2)(3,4)] \cup [(1,3)(2,4)] \cup [(1,4)(2,3)]=\Cl((1,2)(3,4)) \in \mathbb{B}(\Gamma).$$  Using Theorem~\ref{MixedHSintegralChara}, $\Cay(A_4, S)$ is HS-integral. The character table of $A_4$ is given in  Table~\ref{table1Ch5}. Further, using Lemma~\ref{EigNorCayMix}, the HS-spectrum of  $\Cay(A_4, S)$ is obtained as  $\{ [\gamma_{1}]^{1},[\gamma_{2}]^{1},[\gamma_{3}]^{1}, [\gamma_{4}]^{9} \},$ where \linebreak[4]$\gamma_1=3+ 4(\omega_6+\omega_6^5)=7$, $\gamma_2= 3+4(\omega_6\omega_3+\omega_6^5\omega_3^2)=-5$, $\gamma_3=3+ 4(\omega_6\omega_3^2+\omega_6^5\omega_3)=7$ and $\gamma_4=-1$. 
\end{ex}

\section{Eisenstein integral normal mixed Cayley graphs}\label{hs-mixed-Eisenstein-integral}\label{Sec5}
Assume that $S$ is a union of some conjugacy classes of a finite group $\Gamma$, $\mathbf 1 \not\in S$ and  $\Irr(\Gamma)=\{ \chi_1,\ldots,\chi_h\}$. Using the function $f \colon \Gamma \rightarrow \{0,1\}$ defined by 
\[f(s)= \left\{ \begin{array}{rl}
		1 & \mbox{if } s\in S \\
		0 &   \mbox{otherwise} 
	\end{array}\right.\]
in Theorem~\ref{EigNorColCayMix1}, we find that $\frac{1}{\chi_j(\mathbf 1)}\sum\limits_{s \in S} \chi_{j}(s)$ is an eigenvalue of the normal mixed Cayley graph $\Cay(\Gamma, S)$ for each $j \in \{ 1,\ldots , h \}$. Indeed, all the eigenvalues of $\Cay(\Gamma, S)$ are of this form.

For each $j \in \{ 1,\ldots,h\}$, define
$$f_{j}(S) :=\frac{1}{\chi_j(\mathbf 1)} \sum_{s \in S \setminus \overline{S}} \chi_{j}(s)  \hspace{0.5cm}\textnormal{and}\hspace{0.5cm} g_{j}(S):= \frac{1}{\chi_j(\mathbf 1)} \sum_{s \in \overline{S}}(\omega \chi_{j}(s) + \overline{\omega}\chi_{j}(s^{-1})),$$ where $\omega=\frac{1}{2} - \frac{\i \sqrt{3}}{6}$. Let $j \in \{ 1,\ldots,h\}$. By Lemma~\ref{newLemmaConjugateChara}, there exists $k \in \{ 1,\ldots,h\}$ such that $\chi_k=\overline{\chi}_j$. Note that 
\begin{align*}
g_{j}(S) + \omega_3( g_{j}(S) - g_{k}(S) ) =& (1+\omega_3)g_{j}(S) - \omega_3 g_{k}(S)\nonumber\\
=& \frac{1+ \i\sqrt{3} }{2 \chi_j(\mathbf 1)} \sum_{s \in \overline{S}}\left[\left(\frac{1}{2} - \frac{\i \sqrt{3}}{6}\right) \chi_{j}(s) + \left(\frac{1}{2} + \frac{\i \sqrt{3}}{6}\right)\chi_{j}(s^{-1})\right] \nonumber\\
+&  \frac{1- \i\sqrt{3} }{2 \chi_j(\mathbf 1)} \sum_{s \in \overline{S}}\left[ \left(\frac{1}{2} - \frac{\i \sqrt{3}}{6}\right) \chi_{k}(s) + \left(\frac{1}{2} + \frac{\i \sqrt{3}}{6}\right)\chi_{k}(s^{-1})\right]\nonumber\\
=& \frac{1+ \i\sqrt{3} }{2 \chi_j(\mathbf 1)} \sum_{s \in \overline{S}}\left[\left(\frac{1}{2} - \frac{\i \sqrt{3}}{6}\right) \chi_{j}(s) + \left(\frac{1}{2} + \frac{\i \sqrt{3}}{6}\right)\chi_{j}(s^{-1})\right] \nonumber\\
+&  \frac{1- \i\sqrt{3} }{2 \chi_j(\mathbf 1)} \sum_{s \in \overline{S}}\left[ \left(\frac{1}{2} - \frac{\i \sqrt{3}}{6}\right) \chi_{j}(s^{-1}) + \left(\frac{1}{2} + \frac{\i \sqrt{3}}{6}\right)\chi_{j}(s)\right]\nonumber\\
=& \frac{1}{\chi_j(\mathbf 1)} \sum_{s \in \overline{S}} \chi_{j}(s).
\end{align*}
Therefore
\begin{equation}\label{eqEisenIntPartCh5}
\begin{split}
\frac{1}{\chi_j(\mathbf 1)} \sum_{s \in S} \chi_{j}(s)= f_{j}(S)+ g_{j}(S) + \omega_3( g_{j}(S) - g_{k}(S) ).
\end{split}
\end{equation}
Note that if $\chi_k=\overline{\chi}_j$, then $f_{j}(S) = f_{k}(S)$ and $g_{j}(S)-g_{k}(S)= \left[ f_{j}(S)+g_{j}(S)\right]-\left[ f_{k}(S)+g_{k}(S)\right]$. Therefore if $f_{j}(S) + g_{j}(S)$ is an integer for each $j \in \{ 1,\ldots , h \}$, then $g_{j}(S)-g_{k}(S)$ is also an integer for each $j \in \{ 1,\ldots , h \}$. Hence the normal mixed Cayley graph $\Cay(\Gamma,S)$ is Eisenstein integral if and only if $f_{j}(S) + g_{j}(S)$ is an integer for each $j \in \{ 1,\ldots , h \}$.

\begin{lema}\label{CharaEisensteinIntegralCh5}
If $\Gamma$ is a finite group, then the normal mixed Cayley graph $\Cay(\Gamma,S)$ is Eisenstein integral if and only if $2 f_{j}(S)$ and $2 g_{j}(S)$ are integers of the same parity for each $j \in \{ 1,\ldots , h \}$.
\end{lema}
\begin{proof} 
Assume that the normal mixed Cayley graph $\Cay(\Gamma,S)$ is Eisenstein integral. Then $f_{j}(S) + g_{j}(S)$ and $g_{j}(S)-g_{k}(S)$ are integers for each $j \in \{ 1,\ldots , h \}$, where $\chi_k=\overline{\chi}_j$. Note that $$g_{j}(S)-g_{k}(S)= \frac{1}{\chi_j(\mathbf 1)} \sum\limits_{s\in \overline{S}} \frac{-\i \sqrt{3}}{3}( \chi_{j}(s)-  \chi_{j}(s^{-1})).$$ Therefore by Lemma~\ref{Sqrt3NecessIntSumCh5}, $\frac{1}{\chi_j(\mathbf 1)} \sum\limits_{s\in \overline{S}\cup \overline{S}^{-1}} \chi_{j}(s) \in \mathbb{Z}$. Using 
$$2 g_{j}(S)= \frac{1}{\chi_j(\mathbf 1)} \sum\limits_{s\in \overline{S}\cup \overline{S}^{-1}} \chi_{j}(s)- \frac{1}{\chi_j(\mathbf 1)} \sum\limits_{s\in \overline{S}}\frac{\i\sqrt{3}}{3} (\chi_{j}(s)- \chi_{j}(s^{-1})),$$ we find that $2 g_{j}(S)$ is an integer. Since $2 f_{j}(S)=2(f_{j}(S)+g_{j}(S))-2g_{j}(S)$, we see that $2 f_{j}(S)$ is also an integer of the same parity with $2g_{j}(S)$.

Conversely, assume that $2 f_{j}(S)$ and $2 g_{j}(S)$ are integers of the same parity for each $j \in \{ 1,\ldots , h \}$. Then $f_{j}(S) + g_{j}(S)$ is an integer for each $j \in \{ 1,\ldots , h \}$. Hence the normal mixed Cayley graph $\Cay(\Gamma,S)$ is Eisenstein integral.
\end{proof}

\begin{lema}\label{CharaEisensteinIntegral1Ch5}
The normal mixed Cayley graph $\Cay(\Gamma,S)$ is Eisenstein integral if and only if $f_{j}(S)$ and $g_{j}(S)$ are integers for each $j \in \{ 1,\ldots , h \}$.
\end{lema}
\begin{proof}
Let $j \in \{ 1,\ldots , h \}$. Due to Lemma~\ref{CharaEisensteinIntegralCh5}, it is enough to prove that $2 f_{j}(S)$ and $2 g_{j}(S)$ are integers of the same parity if and only if $f_{j}(S)$ and $g_{j}(S)$ are integers. If $f_{j}(S)$ and $g_{j}(S)$ are integers, then clearly $2 f_{j}(S)$ and $2 g_j(S)$ are even integers. Conversely, assume that $2 f_{j}(S)$ and $2 g_{j}(S)$ are integers of the same parity. Since $f_{j}(S)$ is an algebraic integer, the integrality of $2 f_{j}(S)$ implies that $f_{j}(S)$ is an integer. Thus $2 f_{j}(S)$ is an even integer, and so by assumption $2 g_{j}(S)$ is also an even integer. Hence $g_{j}(S)$ is an integer.
\end{proof}

\begin{theorem}\label{MinCharacEisensteinIntegCh5}
Let $\Gamma$ be a finite group. If the normal mixed Cayley graph $\Cay(\Gamma,S)$ is Eisenstein integral, then $\Cay(\Gamma,S)$ is HS-integral.
\end{theorem}
\begin{proof}
Assume that $\Cay(\Gamma,S)$ is Eisenstein integral. By Lemma~\ref{CharaEisensteinIntegral1Ch5}, we find that $f_{j}(S)$ and $g_{j}(S)$ are integers for each $j \in \{ 1,\ldots , h \}$. Note that $f_{j}(S)$ is an eigenvalue of the normal simple Cayley graph $\Cay(\Gamma,S\setminus \overline{S})$. By Theorem~\ref{NorMixCayGraphInteg}, $f_{j}(S)$ is an integer for each $j \in \{ 1,\ldots , h \}$ if and only if $S\setminus\overline{S} \in \mathbb{B}(\Gamma)$. Further, $$ \frac{1}{\chi_j(\mathbf 1)} \sum\limits_{s\in \overline{S}} \frac{-\i\sqrt{3}}{3}( \chi_{j}(s)- \chi_{j}(s^{-1}))=g_{j}(S)-g_{k}(S),$$ and that $g_{j}(S)-g_{k}(S)$ is an integer for each $j \in \{ 1,\ldots , h \}$, where $\chi_k=\overline{\chi}_j$. Using Lemma~\ref{CharaNewIntegSumCh5}, we see that $\overline{S} \in \mathbb{E}(\Gamma)$. Thus by Theorem~\ref{MixedHSintegralChara}, $\Cay(\Gamma,S)$ is HS-integral.
\end{proof}

\begin{lema}\label{NewLemmaEquivaTrans1Ch5} Let $ x\in \Gamma$ and $\ord(x)=3^{t}m$. If  $m\not\equiv 0 \Mod 3$, then the following assertions hold.
\begin{enumerate}[label=(\roman*)]
\item If $t=1$, then $[x]=x^m[x^3] \cup x^{2m} [x^3]$.
\item If $t=1$, then $$\langle\!\langle x \rangle\!\rangle = \left\{ \begin{array}{ll}
			x^m[x^3]     & \mbox{if } m \equiv 1 \Mod 3  \\
			x^{2m}[x^3] & \mbox{if } m \equiv 2 \Mod 3.
		\end{array}\right. $$
\item If $t \geq 2$, then $$[x] = \left\{ \begin{array}{ll}
			x^m[x^3] \cup x^{2m}[x^3] \cup x^{4m}\langle\!\langle x^{-3}\rangle\!\rangle \cup x^{5m}\langle\!\langle x^{-3}\rangle\!\rangle  & \mbox{if } m \equiv 1 \Mod 3  \\
			x^m[x^3] \cup x^{2m}[x^3] \cup x^{4m}\langle\!\langle x^{3}\rangle\!\rangle \cup x^{5m}\langle\!\langle x^{3}\rangle\!\rangle & \mbox{if } m \equiv 2 \Mod 3.
		\end{array}\right. $$
\item If $t \geq 2$, then $$[x] = \left\{ \begin{array}{ll}
			x^{7m}[x^3] \cup x^{8m}[x^3] \cup x^{4m}\langle\!\langle x^{3}\rangle\!\rangle \cup x^{5m}\langle\!\langle x^{3}\rangle\!\rangle  & \mbox{if } m \equiv 1 \Mod 3  \\
			x^{7m}[x^3] \cup x^{8m}[x^3] \cup x^{4m}\langle\!\langle x^{-3}\rangle\!\rangle \cup x^{5m}\langle\!\langle x^{-3}\rangle\!\rangle & \mbox{if } m \equiv 2 \Mod 3.
		\end{array}\right. $$
\item If $t \geq 2$, then $[x]= x^{m}[x^3] \cup x^{2m}[x^3] \cup x^{4m}[x^3] \cup x^{5m}[x^3] \cup x^{7m}[x^3] \cup x^{8m}[x^3]$.
\item If $t \geq 2$, then $$\langle\!\langle x \rangle\!\rangle = \left\{ \begin{array}{ll}
			x^m[x^3] \cup x^{4m}\langle\!\langle x^{-3}\rangle\!\rangle  & \mbox{if } m \equiv 1 \Mod 3  \\
			 x^{2m}[x^3] \cup x^{5m}\langle\!\langle x^{3}\rangle\!\rangle & \mbox{if } m \equiv 2 \Mod 3.
		\end{array}\right. $$
\item If $t \geq 2$, then $$\langle\!\langle x \rangle\!\rangle  = \left\{ \begin{array}{ll}
			x^{7m}[x^3] \cup x^{4m}\langle\!\langle x^{3}\rangle\!\rangle  & \mbox{if } m \equiv 1 \Mod 3  \\
			x^{8m}[x^3] \cup x^{5m}\langle\!\langle x^{-3}\rangle\!\rangle & \mbox{if } m \equiv 2 \Mod 3.
		\end{array}\right.  $$
\item If $t \geq 2$, then $$\langle\!\langle x \rangle\!\rangle = \left\{ \begin{array}{ll}
			x^m[x^3] \cup x^{4m}[x^3]  \cup x^{7m}[x^3]   & \mbox{if } m \equiv 1 \Mod 3  \\
			x^{2m}[x^3] \cup x^{5m}[x^3] \cup x^{8m}[x^3] & \mbox{if } m \equiv 2 \Mod 3.
		\end{array}\right. $$
\end{enumerate}
\end{lema}
\begin{proof}
\begin{enumerate}[label=(\roman*)]
\item Assume that $\ord(x)=3m$ and $m \not\equiv 0 \Mod 3$. Let us take $x^{m+3r} \in x^{m}[x^3]$ for some $r \in G_{m}(1)$.  Then $\gcd(r, m)=1$, and so $\gcd(m+3r, 3m)=1$. Therefore $x^{m}[x^3] \subseteq [x]$. Similarly, we have $x^{2m}[x^3] \subseteq [x]$. Therefore $x^{m}[x^3] \cup x^{2m}[x^3] \subseteq [x]$. Note that $|[x]|= \varphi(3m)=2 \varphi(m)$, $|x^{m}[x^3]| = \varphi(m) = |x^{2m}[x^3]|$, and that $x^{m}[x^3] \cup x^{2m}[x^3]$ is a disjoint union. Thus, the sizes of $[x]$ and $x^m[x^3] \cup x^{2m} [x^3]$ are equal, and therefore $[x]=x^m[x^3] \cup x^{2m} [x^3]$.
\item Assume that $\ord(x)=3m$ and $m \not\equiv 0 \Mod 3$. Let $m \equiv 1 \Mod 3$. We see that $\gcd(r, m)=1$ if and only if $\gcd(m+3r, 3m)=1$. Also $m+3r \equiv 1 \Mod 3$. Therefore
$$x^m[x^3] =\{ x^{m+3r}: r \in G_m(1) \}\subseteq \{ x^{k}: k \in G_{3m,3}^1(1) \} = \langle\!\langle x \rangle\!\rangle. $$ Since the sets $x^m[x^3]$ and $\langle\!\langle x \rangle\!\rangle$ are of equal size, we get $x^m[x^3]= \langle\!\langle x \rangle\!\rangle$. Similarly, if $m \equiv 2 \Mod 3$, we have $x^{2m}[x^3]= \langle\!\langle x \rangle\!\rangle$.
\item Assume that $p=3^{t}m$, $t \geq 2$ and $m \equiv 1 \Mod 3$. Let $x^{m+3r} \in x^{m}[x^3]$ for some $r \in G_{\frac{p}{3}}(1)$.  Then $\gcd(r, \frac{p}{3})=1$, and so $\gcd(m+3r, p)=1$. Thus $x^{m}[x^3] \subseteq [x]$.  Similarly, $x^{2m}[x^3] \subseteq [x]$. Now let $x^{4m+3r} \in x^{4m}\langle\!\langle x^{-3}\rangle\!\rangle$ for some $r \in G_{\frac{p}{3},3}^2(1)$. Again, $\gcd(r, \frac{p}{3})=1$ implies that $\gcd(4m+3r, p)=1$. Therefore $x^{4m}\langle\!\langle x^{-3}\rangle\!\rangle \subseteq [x]$. Similarly, $x^{5m}\langle\!\langle x^{-3}\rangle\!\rangle \subseteq [x]$. Thus $x^m[x^3] \cup x^{2m}[x^3] \cup x^{4m}\langle\!\langle x^{-3}\rangle\!\rangle \cup x^{5m}\langle\!\langle x^{-3}\rangle\!\rangle \subseteq [x]$. Note that $|[x]|=2\times 3^{t-1}\varphi(m)$. Also, $|x^m[x^3]| = 2\times 3^{t-2}\varphi(m) = | x^{2m}[x^3]|$, $|x^{4m}\langle\!\langle x^{-3}\rangle\!\rangle | = 3^{t-2} \varphi(m) = |x^{5m}\langle\!\langle x^{-3}\rangle\!\rangle|$, and that $x^m[x^3] \cup x^{2m}[x^3] \cup x^{4m}\langle\!\langle x^{-3}\rangle\!\rangle \cup x^{5m}\langle\!\langle x^{-3}\rangle\!\rangle$ is a disjoint union. Thus, the sizes of $[x]$ and $x^m[x^3] \cup x^{2m}[x^3] \cup x^{4m}\langle\!\langle x^{-3}\rangle\!\rangle \cup x^{5m}\langle\!\langle x^{-3}\rangle\!\rangle$ are equal, and hence these two sets are equal. For $m \equiv 2 \Mod 3$, the proof follows the similar steps as in the case of $m \equiv 1 \Mod 3$.
\item The proof is similar to the proof Part (iii). For the sake of completeness, we provide the proof  for the case $m \equiv 1 \Mod 3$. Assume that $p=3^{t}m$, $t \geq 2$ and $m \equiv 1 \Mod 3$. Let $x^{7m+3r} \in x^{7m}[x^3]$ for some $r \in G_{\frac{p}{3}}(1)$.  Then $\gcd(r, \frac{p}{3})=1$, and so $\gcd(7m+3r, p)=1$. Thus $x^{7m}[x^3] \subseteq [x]$.  Similarly, $x^{8m}[x^3] \subseteq [x]$. Now let $x^{4m+3r} \in x^{4m}\langle\!\langle x^{3}\rangle\!\rangle$ for some $r \in G_{\frac{p}{3},3}^1(1)$. Again, $\gcd(r, \frac{p}{3})=1$ gives $\gcd(4m+3r, p)=1$. Thus, $x^{4m}\langle\!\langle x^{3}\rangle\!\rangle \subseteq [x]$. Similarly, $x^{5m}\langle\!\langle x^{3}\rangle\!\rangle \subseteq [x]$. Thus $x^{7m}[x^3] \cup x^{8m}[x^3] \cup x^{4m}\langle\!\langle x^{3}\rangle\!\rangle \cup x^{5m}\langle\!\langle x^{3}\rangle\!\rangle \subseteq [x]$. Note that $x^{7m}[x^3] \cup x^{8m}[x^3] \cup x^{4m}\langle\!\langle x^{3}\rangle\!\rangle \cup x^{5m}\langle\!\langle x^{3}\rangle\!\rangle$ is a disjoint union, and so its size is equal to $2 \times 3^{t-2}\varphi(m) + 2 \times 3^{t-2}\varphi(m) + 3^{t-2}\varphi(m) + 3^{t-2}\varphi(m)$, which is equal to the size $2 \times 3^{t-1}\varphi(m)$ of $[x]$. Hence we have the desired equality.

\item Combine Part (iii) and Part (iv), and use $[x^3]=\langle\!\langle x^3 \rangle\!\rangle \cup \langle\!\langle x^{-3} \rangle\!\rangle$ to get the proof of this part.
\item Assume that $p=3^{t}m$, $t \geq 2$ and $m \equiv 1 \Mod 3$. We see that if $r \in G_{\frac{p}{3}}(1)$, then $m+3r \in G_{p,3}^1(1)$. Similarly, if $r \in G_{\frac{p}{3},3}^2(1)$, then $4m+3r \in G_{p,3}^1(1)$. Thus we have $x^m[x^3] \cup x^{4m}\langle\!\langle x^{-3}\rangle\!\rangle \subseteq \langle\!\langle x\rangle\!\rangle$. Since the sizes of $x^m[x^3] \cup x^{4m}\langle\!\langle x^{-3}\rangle\!\rangle$ and $\langle\!\langle x\rangle\!\rangle$ are equal, we find that $x^m[x^3] \cup x^{4m}\langle\!\langle x^{-3}\rangle\!\rangle = \langle\!\langle x\rangle\!\rangle$. Similarly,  we have $x^{2m}[x^3] \cup x^{5m}\langle\!\langle x^{3}\rangle\!\rangle = \langle\!\langle x\rangle\!\rangle$ for $m \equiv 2 \Mod 3$.
\item The proof of this part follows similar steps as in Part (vi). For the sake of completeness, we provide the proof for the case $m \equiv 2 \Mod 3$. Assume that $p=3^{t}m$, $t \geq 2$ and $m \equiv 2 \Mod 3$.  We see that if $r \in G_{\frac{p}{3}}(1)$, then $8m+3r \in G_{p,3}^1(1)$. Also, if $r \in G_{\frac{p}{3},3}^2(1)$, then $5m+3r \in G_{p,3}^1(1)$. Thus $x^{8m}[x^3] \cup x^{5m}\langle\!\langle x^{-3}\rangle\!\rangle   \subseteq  \langle\!\langle x \rangle\!\rangle$. Since the sizes of $x^{8m}[x^3] \cup x^{5m}\langle\!\langle x^{-3}\rangle\!\rangle$ and $\langle\!\langle x \rangle\!\rangle$ are equal, we find that $x^{8m}[x^3] \cup x^{5m}\langle\!\langle x^{-3}\rangle\!\rangle   =  \langle\!\langle x \rangle\!\rangle$.
\item Combine Part (vi) and Part (vii), and use $[x^3]=\langle\!\langle x^3 \rangle\!\rangle \cup \langle\!\langle x^{-3} \rangle\!\rangle$ to get the proof of this part. \qedhere
\end{enumerate}
\end{proof}

For $ x\in \Gamma$, define $S_x^1:=\text{\footnotesize$\bigcup\limits_{s\in {\rm Cl}(x)}$} [ s ]$. We see that if $m=\ord(x)$, then 
\[S_x^1=\{g^{-1}x^r g\colon g\in \Gamma, r\in G_m(1)\}= \text{\footnotesize$\bigcup\limits_{s\in [x]}$}{\rm Cl}(s).\]
The set $S_x^1$ is also known as the rational conjugacy class\index{rational conjugacy class} of $x$. See~\cite{foster2016spectra} for details. For each $y\in S_x^1$, it is clear that ${\rm Cl}(y), [y]\subseteq S_x^1$. Now let $A$ be a symmetric subset of $\Gamma$ such that $x\in A$, and ${\rm Cl}(a), [a]\subseteq A$ for each $a\in A$. Let $g^{-1}x^r g\in S_x^1$, where $g\in \Gamma$, $r\in G_m(1)$ and $m=\ord(x)$. As $[x]\subseteq A$, we have $x^r\in A$. Now ${\rm Cl}(x^r)\subseteq A$, and so $g^{-1}x^r g\in A$. Thus $S_x^1 \subseteq A$, and therefore $S_x^1$ is the smallest symmetric subset of $\Gamma$ containing $x$ that is closed under both conjugacy and the equivalence relation $\sim$. Considering each of the  repeated equivalence classes, if any, only once  in $\text{\footnotesize$\bigcup\limits_{s\in {\rm Cl}(x)}$} [ s ]$, we can write $S_x^1=\text{\footnotesize$\bigcup\limits_{i=1}^{\ell}$}[x_i]$, where the equivalence classes $[x_1], \ldots, [x_{\ell}]$ are distinct. We state this fact in the next lemma.

\begin{lema}\label{SymbolSetChara1}
If $x \in \Gamma$, then there exist distinct equivalence classes $[x_1], \ldots, [x_{\ell}]$  such that $S_x^1= \text{\footnotesize$\bigcup\limits_{i=1}^{\ell}$}[x_i]$, where $x_1,\ldots,x_{\ell}\in {\rm Cl}(x)$.
\end{lema}

\begin{lema}\label{partition1}If $y\in S_x^1$, then $S_y^1=S_x^1$.
\end{lema}
\begin{proof}Let $y\in S_x^1$, so that $y=g^{-1}x^r g$ for some $g\in \Gamma$ and $r\in G_m(1)$, where $m=\ord(x)$. We see that $\ord(y)=\ord(x)=m$. Now let $z\in S_y^1$. Then $z=h^{-1}y^t h$ for some $h\in \Gamma$ and $t\in G_m(1)$. This gives $z=h^{-1}y^t h = h^{-1}g^{-1}x^{rt} g h\in S_x^1$. Conversely, let $w\in S_x^1$ so that $w=h^{-1}x^t h$ for some $h\in \Gamma$ and $t\in G_m(1)$. Therefore \[w=h^{-1}x^t h = (h^{-1}g)g^{-1}(x^r)^{r^{-1}t}g( g^{-1} h) =(h^{-1}g)y^{r^{-1}t}  ( g^{-1} h)\in S_y^1.\] Here $r^{-1}$ is the multiplicative inverse of $r$ in the group $G_m(1)$. Hence we conclude that $S_y^1=S_x^1$.
\end{proof}
Due to Lemma~\ref{partition1}, the sets $S_x^1$ and $S_y^1$ are either disjoint or equal. Hence the class of distinct subsets of $\Gamma$ of the form $S_x^1$ is a partition of $\Gamma$.

Let $x\in \Gamma(3)$ be an element of order $m$. The element $x$ is said to be \textit{tolerable}\index{tolerable} if $x^r \not\in {\rm Cl}(x)$ for all $r \in G_{m,3}^2(1)$. 
The following lemma characterizes tolerable elements in terms of skew-symmetric sets. 
\begin{lema}\label{iff2}
If $x\in \Gamma(3)$, then $x$ is tolerable if and only if the set $\text{\footnotesize$\bigcup\limits_{s\in {\rm Cl}(x)}$} \langle\!\langle s \rangle\!\rangle$ is  skew-symmetric.
\end{lema} 
\begin{proof}We see that if $m=\ord(x)$, then 
\[ \text{\footnotesize$\bigcup\limits_{s\in {\rm Cl}(x)}$} \langle\!\langle s \rangle\!\rangle =\{g^{-1}x^r g\colon g\in \Gamma, r\in G_{m,3}^1(1)\}= \text{\footnotesize$\bigcup\limits_{s\in \langle\!\langle x\rangle\!\rangle}$}{\rm Cl}(s).\]
Assume that $x$ is not tolerable, so that $x^r\in {\rm Cl}(x)$ for some $r\in G_{m,3}^2(1)$. As $m-r\in G_{m,3}^1(1)$ and ${\rm Cl}(x)\subseteq \text{\footnotesize$\bigcup\limits_{s\in {\rm Cl}(x)}$} \langle\!\langle s \rangle\!\rangle$, we find that $x^r,x^{m-r}\in \text{\footnotesize$\bigcup\limits_{s\in {\rm Cl}(x)}$} \langle\!\langle s \rangle\!\rangle$. Hence $\text{\footnotesize$\bigcup\limits_{s\in {\rm Cl}(x)}$} \langle\!\langle s \rangle\!\rangle$ is not skew-symmetric. 

On the other hand, assume that $\text{\footnotesize$\bigcup\limits_{s\in {\rm Cl}(x)}$} \langle\!\langle s \rangle\!\rangle$ is not a skew-symmetric set. Then there is an\linebreak[4] $y=g^{-1}x^r g\in \text{\footnotesize$\bigcup\limits_{s\in {\rm Cl}(x)}$} \langle\!\langle s \rangle\!\rangle$ for some $r \in G_{m,3}^1(1)$ such that $y^{-1}\in \text{\footnotesize$\bigcup\limits_{s\in {\rm Cl}(x)}$} \langle\!\langle s \rangle\!\rangle$. Therefore we have $g^{-1}x^{m-r} g=y^{-1}=h^{-1}x^k h$ for some $h\in \Gamma, k\in G_{m,3}^1(1)$. Let $t\in G_m(1)$ be the multlipicative inverse of $m-r$. We have $g^{-1}x^{(m-r)t} g=h^{-1}x^{kt} h$, and it gives $x^{kt}=hg^{-1}x gh^{-1}\in {\rm Cl}(x)$. Since $(m-r)t\equiv 1 \Mod 3$ and $m-r\in G_{m,3}^2(1)$, we have that $t\in G_{m,3}^2(1)$.  Thus $kt\in G_{m,3}^2(1)$ with $x^{kt}\in {\rm Cl}(x)$, giving that $x$ is not tolerable. 
\end{proof}

Let $ x\in \Gamma(3)$ be tolerable, and define $S_x^3:=\text{\footnotesize$\bigcup\limits_{s\in {\rm Cl}(x)}$} \langle\!\langle s \rangle\!\rangle$. The structure and properties of the set $S_x^3$ are similar to those of $S_x^1$ and $S_x^4$.  If $\Gamma$ is abelian, then $S_x^3=\langle\!\langle x \rangle\!\rangle$ for each $x \in \Gamma(3)$. For each $y\in S_x^3$, it is clear that ${\rm Cl}(y), \langle\!\langle y \rangle\!\rangle\subseteq S_x^3$. Now let $A$ be a skew-symmetric subset of $\Gamma$ containing a tolerable element $x$, and ${\rm Cl}(a), \langle\!\langle a\rangle\!\rangle\subseteq A$ for each $a\in A$. It is easy to see that $S_x^3 \subseteq A$. Thus, $S_x^3$ is the smallest skew-symmetric subset of $\Gamma$ containing $x$ that is closed under both conjugacy and the equivalence relation $\simeq$. Considering each of the  repeated equivalence classes, if any, only once in $\text{\footnotesize$\bigcup\limits_{s\in {\rm Cl}(x)}$} \langle\!\langle s \rangle\!\rangle$, we can write $S_x^3=\text{\footnotesize$\bigcup\limits_{i=1}^r$}\langle\!\langle y_i \rangle\!\rangle$, where the equivalence classes $\langle\!\langle y_1 \rangle\!\rangle, \ldots, \langle\!\langle y_r \rangle\!\rangle$ are distinct. We state this fact in the next lemma.

\begin{lema}\label{SymbolSetChara2ch5}
If $x$ is a tolerable element in $\Gamma(3)$, then there are distinct equivalence classes $\langle\!\langle x_1 \rangle\!\rangle, \ldots, \langle\!\langle x_r \rangle\!\rangle$ such that $S_x^3= \text{\footnotesize$\bigcup\limits_{i=1}^r$}\langle\!\langle x_i \rangle\!\rangle$, where $x_1,\ldots,x_r\in {\rm Cl}(x)$.
\end{lema} 

\begin{lema}\label{partition3}If $y\in S_x^3$, then $S_y^3=S_x^3$.
\end{lema}
\begin{proof}Let $y\in S_x^3$, so that $y=g^{-1}x^r g$ for some $g\in \Gamma$ and $r\in G_{m,3}^1(1)$, where $m=\ord(x)$. We see that $\ord(y)=\ord(x)=m$. Now let $z\in S_y^3$. Then $z=h^{-1}y^t h$ for some $h\in \Gamma$ and $t\in G_{m,3}^1(1)$. This gives $z=h^{-1}y^t h = h^{-1}g^{-1}x^{rt} g h\in S_x^3$. Conversely, let $w\in S_x^3$ so that $w=h^{-1}x^t h$ for some $h\in \Gamma$ and $t\in G_{m,3}^1(1)$. Therefore \[w=h^{-1}x^t h = (h^{-1}g)g^{-1}(x^r)^{r^{-1}t}g( g^{-1} h) =(h^{-1}g)y^{r^{-1}t}  ( g^{-1} h)\in S_y^3.\] Here $r^{-1}$ is the multiplicative inverse of $r$ in the subgroup $G_{m,3}^1(1)$. Thus we conclude that $S_y^3=S_x^3$.
\end{proof}
Due to Lemma~\ref{partition3}, the sets $S_x^3$ and $S_y^3$ are either disjoint or equal. 

\begin{lema}\label{NewLemmaEquivaTrans2Ch5} Let $ x\in \Gamma(3)$. If $S_x^1 = [ x_1 ] \cup \cdots \cup [ x_k ]$ for some $x_1,\ldots,x_k\in \Cl(x)$, then $S_{x^3}^1= [ x_1^{3} ] \cup \cdots \cup [ x_k^{3} ]$.
\end{lema}
\begin{proof}
Let $m=\ord(x)$ and $S_x^1 = [ x_1 ] \cup \cdots \cup [ x_k ]$ for some $x_1\ldots,x_k\in {\rm Cl}(x)$. Assume that the sets $[ x_1 ], \ldots , [ x_k ]$ are all distinct. We see that
\begin{equation*}
\begin{split}
S_{x^3}^1=&\left\{g^{-1}x^{3r} g\colon g\in \Gamma, r\in G_{\frac{m}{3}}(1)\right\}\\
=&\left\{g^{-1}x^{3r} g\colon g\in \Gamma, r\in G_{\frac{m}{3}}(1)\right\} \cup \left\{g^{-1}x^{3(\frac{m}{3}+r)} g\colon g\in \Gamma, r\in G_{\frac{m}{3}}(1)\right\}\\
& \cup \left\{g^{-1}x^{3(\frac{2m}{3}+r)} g\colon g\in \Gamma, r\in G_{\frac{m}{3}}(1)\right\}\\
=&\left\{g^{-1}x^{3r} g\colon g\in \Gamma, r\in G_{m}(1), r <\frac{m}{3} \right\} \cup \left\{g^{-1}x^{3t} g\colon g\in \Gamma, t\in G_{m}(1),\frac{m}{3} < t < \frac{2m}{3}\right\}\\
&\cup \left\{g^{-1}x^{3t} g\colon g\in \Gamma, t\in G_{m}(1),\frac{2m}{3} < t \right\}\\
=&\left\{g^{-1}x^{3r} g\colon g\in \Gamma, r\in G_{m}(1) \right\}\\
=&\left\{y^3\colon y\in S_x^1\right\}.
\end{split}
\end{equation*} 
Now noting that $\{s^3\colon s\in [x]\} =[x^3]$ and $S_x^1 =[ x_1 ] \cup \cdots \cup [ x_k ]$, we have $S_{x^3}^1= [ x_1^3 ] \cup \cdots \cup [ x_k^3 ]$.
\end{proof}

\begin{lema} If $x\in \Gamma(3)$ is tolerable, then $S_x^3 \cup S_{x^{-1}}^3=S_x^1$.
\end{lema}
\begin{proof}Let $m=\ord(x)$. We have
\begin{align*}
S_x^3 \cup S_{x^{-1}}^3= & \left\{g^{-1}x^{r} g\colon g\in \Gamma, r\in G_{m,3}^1(1)\right\} \cup \left\{g^{-1}x^{-r} g\colon g\in \Gamma, r\in G_{m,3}^1(1)\right\}\\
=& \left\{g^{-1}x^{r} g\colon g\in \Gamma, r\in G_{m,3}^1(1)\right\} \cup \left\{g^{-1}x^{r} g\colon g\in \Gamma, r\in G_{m,3}^2(1)\right\}\\
=& \left\{g^{-1}x^{r} g\colon g\in \Gamma, r\in G_m(1)\right\}\\
=& S_x^1. \qedhere
\end{align*}
\end{proof}

\begin{lema}\label{NewLemmaEquivaTrans3Ch5} Let $x\in \Gamma(3)$ be a tolerable element. If $S_x^3 = \langle\!\langle  x_1 \rangle\!\rangle  \cup \cdots \cup \langle\!\langle  x_k \rangle\!\rangle$ for some $x_1,\ldots,x_k\in \Cl(x)$, then $S_{x^3}^1= [x_1^3] \cup \cdots \cup [x_k^3]$.
\end{lema}
\begin{proof} 
Assume that $S_x^3 = \langle\!\langle  x_1 \rangle\!\rangle  \cup \cdots \cup \langle\!\langle  x_k \rangle\!\rangle$ for some $x_1,\ldots,x_k\in {\rm Cl}(x)$. Then we have $S_{x^{-1}}^3 = \langle\!\langle  x_1^{-1} \rangle\!\rangle  \cup \cdots \cup \langle\!\langle  x_k^{-1} \rangle\!\rangle$. Therefore
\begin{align*}
S_x^1 =S_x^3 \cup S_{x^{-1}}^3 = (\langle\!\langle  x_1 \rangle\!\rangle \cup \langle\!\langle  x_1^{-1} \rangle\!\rangle )  \cup \cdots \cup (\langle\!\langle  x_1 \rangle\!\rangle \cup \langle\!\langle  x_k^{-1} \rangle\!\rangle)
=[ x_1 ]  \cup \cdots \cup [ x_k ].
\end{align*}
Now the result follows from Lemma~\ref{NewLemmaEquivaTrans2Ch5}.\qedhere
\end{proof}

\noindent For $x \in \Gamma$ and $j\in \{1,\ldots,h\}$, define 
$$C_x(j):=\frac{1}{\chi_j({\mathbf 1})}\sum_{s \in S^1_x } \chi_{j}(s).$$ 
Note that $S_x^1 \in \mathbb{B}(\Gamma)$ and $C_x(j)$ is an eigenvalue of the normal undirected Cayley graph ${\rm Cay}(\Gamma, S^1_x)$. As a consequence of Theorem~\ref{NorMixCayGraphInteg}, $C_x(j)$ is an integer for each $x \in \Gamma$ and $j\in \{1,\ldots,h\}$. 

\begin{lema}\label{NewSum4mLemma11Ch5} Let $ x\in \Gamma$ and $\ord(x)=3^{t}m$. If  $m\not\equiv 0 \Mod 3$ and $t \geq 2$, then
$$2C_x(j)=\bigg(\sum\limits_{s\in G_{9}(1)} \chi_j(x^{sm})\bigg) C_{x^3}(j).$$ 
Moreover, $\frac{C_x(j)}{3}$ is an integer for each $j \in \{ 1,\ldots , h \}$.
\end{lema}
\begin{proof} Let $S_x^1= [ x_1 ]  \cup \cdots \cup [ x_k ]$ for some $x_1,\ldots ,x_k\in \Cl(x)$ and $j \in \{ 1,\ldots , h \}$. We use the fact that each $[x_i]$ can be written as disjoint unions in two different ways using Part (iii) and Part (iv) of Lemma~\ref{NewLemmaEquivaTrans1Ch5}. 
For $m \equiv 1 \Mod 3$, using Part (iii) and Part (iv) of Lemma~\ref{NewLemmaEquivaTrans1Ch5}, we have 
\begin{align}
2 \sum_{s \in [ x_i ] } \chi_j(s) =& \sum_{s \in [ x_i ] } \chi_j(s) + \sum_{s \in [ x_i ] } \chi_j(s) \nonumber\\
 =& \sum_{s \in  x_i^m[x_i^3]} \chi_j(s) + \sum_{s \in  x_i^{2m}[x_i^3]} \chi_j(s) + \sum_{s \in  x_i^{4m}\langle\!\langle x_i^{-3}\rangle\!\rangle} \chi_j(s) + \sum_{s \in  x_i^{5m}\langle\!\langle x_i^{-3}\rangle\!\rangle} \chi_j(s) \nonumber\\
&+ \sum_{s \in  x_i^{7m}[x_i^3]} \chi_j(s) + \sum_{s \in x_i^{8m}[x_i^3] } \chi_j(s) + \sum_{s \in  x_i^{4m}\langle\!\langle x_i^{3}\rangle\!\rangle} \chi_j(s) + \sum_{s \in  x_i^{5m}\langle\!\langle x_i^{3}\rangle\!\rangle} \chi_j(s) \nonumber\\
=& \sum_{s \in [x_i^3]} \chi_j(x_i^{m}) \chi_j(s) + \sum_{s \in  [x_i^3]} \chi_j(x_i^{2m}) \chi_j(s) + \sum_{s \in  [x_i^3]} \chi_j(x_i^{4m})\chi_j(s) \nonumber\\ 
&+ \sum_{s \in [x_i^3]} \chi_j(x_i^{5m}) \chi_j(s) + \sum_{s \in [x_i^3]} \chi_j(x_i^{7m})\chi_j(s) + \sum_{s \in [x_i^3] } \chi_j(x_i^{8m})\chi_j(s) \label{NewEqFinalCorrCh53}
\end{align}
for each $i \in \{ 1, \ldots, k\}$. Similarly, for $m \equiv 2 \Mod 3$, using Part (iii) and Part (iv) of Lemma~\ref{NewLemmaEquivaTrans1Ch5}, we have 
\begin{align}
2 \sum_{s \in [ x_i ] } \chi_j(s) =& \sum_{s \in [x_i^3]} \chi_j(x_i^{m}) \chi_j(s) + \sum_{s \in  [x_i^3]} \chi_j(x_i^{2m}) \chi_j(s) + \sum_{s \in  [x_i^3]} \chi_j(x_i^{4m})\chi_j(s) \nonumber\\ 
&+ \sum_{s \in [x_i^3]} \chi_j(x_i^{5m}) \chi_j(s) + \sum_{s \in [x_i^3]} \chi_j(x_i^{7m})\chi_j(s) + \sum_{s \in [x_i^3] } \chi_j(x_i^{8m})\chi_j(s) \label{NewEqFinalCorrCh54}
\end{align} for each $i \in \{ 1, \ldots, k\}$. Thus using Equations~(\ref{NewEqFinalCorrCh53}) and (\ref{NewEqFinalCorrCh54}), we get
\begin{align} \label{EqNewSum4mLemma11Ch5}
2C_x(j) =& \frac{1}{\chi_j(\mathbf 1)} \sum_{i=1}^{k} 2 \sum_{s \in [ x_i ] } \chi_j(s) \nonumber\\
=& \frac{1}{\chi_j(\mathbf 1)} \sum_{i=1}^{k} \bigg( \sum_{s \in [x_i^3] } \chi_j(x_i^m) \chi_j(s) + \sum_{s \in [ x_i^3 ] } \chi_j(x_i^{2m}) \chi_j(s) + \sum_{s \in [x_i^3] } \chi_j(x_i^{4m}) \chi_j(s)\nonumber\\
& + \sum_{s \in [ x_i^3 ] } \chi_j(x_i^{5m}) \chi_j(s) + \sum_{s \in [x_i^3] } \chi_j(x_i^{7m}) \chi_j(s) + \sum_{s \in [ x_i^3 ] } \chi_j(x_i^{8m}) \chi_j(s) \bigg) \nonumber\\
=&  \bigg( \chi_j(x^{m}) + \chi_j(x^{2m}) + \chi_j(x^{4m}) + \chi_j(x^{5m}) + \chi_j(x^{7m}) \nonumber\\
& + \chi_j(x^{8m})\bigg) \frac{1}{\chi_j(\mathbf 1)} \sum_{i=1}^{k}  \sum_{s \in [ x_i^3 ] }  \chi_j(s) \nonumber\\
=&  \bigg( \sum\limits_{r\in G_{9}(1)} \chi_j(x^{rm})\bigg) C_{x^3}(j).
\end{align}
Here the third equality in Equation (\ref{EqNewSum4mLemma11Ch5}) follows from the fact that $x_1,\ldots ,x_k\in \Cl(x)$, and  the fourth equality in Equation (\ref{EqNewSum4mLemma11Ch5}) follows from Lemma~\ref{NewLemmaEquivaTrans2Ch5}. 

Let $d_j=\chi_j(\mathbf 1)$. We apply induction on $t$ to prove that $\frac{C_x(j)}{3}$ is an integer. Let $t=2$, so that $\ord(x)=9m$ with $m \not\equiv 0 \Mod 3$. By Theorem~\ref{NewThmChap1Added}, we have $\chi_j(x^{m}) = \sum\limits_{\ell=1}^{d_j} \epsilon_{j\ell}$, where $\epsilon_{j1},\ldots ,\epsilon_{jd_j}$ are some $9$-th roots of unity. We have
\begin{equation} \label{EqNewSum4mLemma55}
\begin{split}
\sum\limits_{r\in G_{9}(1)} \chi_j(x^{rm}) &=\sum\limits_{r\in G_{9}(1)} \sum\limits_{\ell=1}^{d_j} \epsilon_{j\ell}^r
= \sum\limits_{\ell=1}^{d_j} \sum\limits_{r\in G_{9}(1)} \epsilon_{j\ell}^r.
\end{split} 
\end{equation} 
Note that $\sum\limits_{r\in G_{9}(1)} \epsilon_{j\ell}^r = (\epsilon_{j\ell} + \epsilon_{j\ell}^2)(1+\epsilon_{j\ell}^3 + \epsilon_{j\ell}^6)$. Since $\epsilon_{j\ell} \in \{1,\omega_9,\omega_9^2,\ldots , \omega_9^8\}$, we have 
$$ \sum\limits_{r\in G_{9}(1)} \epsilon_{j\ell}^r=    \left\{ \begin{array}{rl}
					6 & \mbox{if } \epsilon_{j\ell}=1 \\
					-3  & \mbox{if } \epsilon_{j\ell} \in \{\omega_9^3,\omega_9^6\}\\
					0  & \mbox{otherwise.}
				\end{array}\right.$$
Thus, $\sum\limits_{r\in G_{9}(1)}\epsilon_{j\ell}^r$ is an integer multiple of $3$ for each $ \ell \in \{ 1, \ldots ,d_j\}$. Therefore by Equation~(\ref{EqNewSum4mLemma55}), $\sum\limits_{r\in G_{9}(1)} \chi_j(x^{rm})$ is an integer multiple of $3$. Now Equation~(\ref{EqNewSum4mLemma11Ch5}) gives that $\frac{2C_x(j)}{3}$ is an integer. Since $C_x(j)$ is an integer, integrality of $\frac{2C_x(j)}{3}$ gives that $\frac{C_x(j)}{3}$ is also an integer. 

Assume that $\frac{C_y(j)}{3}$ is an integer for each $j \in \{ 1,\ldots ,h\}$ whenever $\ord(y)=3^{t-1}m$ with $m \not\equiv 0 \Mod 3$ and $t\geq 3$. Let $\ord(x)=3^tm$ with $m \not\equiv 0 \Mod 3$ and $t\geq 3$. Note that $\ord(x^3)=3^{t-1}m$. Therefore by induction hypothesis, $\frac{C_{x^3}(j)}{3}$ is an  integer. By Equation~(\ref{EqNewSum4mLemma11Ch5}), $\sum\limits_{s\in G_{9}(1)} \chi_j(x^{sm})$ is a rational algebraic integer whenever $C_{x^3}(j)\neq 0$. Thus, if $C_{x^3}(j)\neq 0$ then $\sum\limits_{s\in G_{9}(1)} \chi_j(x^{sm})$ is an integer. Therefore by Equation~(\ref{EqNewSum4mLemma11Ch5}), $\frac{2C_x(j)}{3}$ is an integer, and accordingly $\frac{C_x(j)}{3}$ is an integer. Hence the proof is complete by induction.
\end{proof}

Let $x \in \Gamma(3)$ be tolerable. For each $j \in \{ 1,\ldots , h \}$, define $$T_x(j):= \frac{1}{\chi_j(\mathbf 1)} \sum_{s \in S^3_x} \i \sqrt{3} ( \chi_{j}(s)-\chi_{j}(s^{-1})).$$ 

Let $j \in \{ 1,\ldots ,h\}$. Using $S^1_x=S^3_x \cup S^3_{x^{-1}}$, we see that
\begin{align*}
\frac{C_x(j)+T_x(j)}{2} &= \frac{1}{2\chi_j(\mathbf 1)} \bigg[ \sum_{s \in S^1_x}  \chi_{j}(s) +  \sum_{s \in S^3_x} \i \sqrt{3} ( \chi_{j}(s)-\chi_{j}(s^{-1})) \bigg]\nonumber\\
&= \frac{1}{2\chi_j(\mathbf 1)} \bigg[ \sum_{s \in S^3_x}  \chi_{j}(s) + \sum_{s \in S^3_{x^{-1}}}  \chi_{j}(s) +  \sum_{s \in S^3_x} \i \sqrt{3} ( \chi_{j}(s)-\chi_{j}(s^{-1})) \bigg]\nonumber\\
&= \frac{1}{\chi_j(\mathbf 1)} \bigg[ \sum_{s \in S^3_x} \left( \omega_6 \chi_{j}(s) +  \omega_6^5 \chi_{j}(s^{-1}) \right) \bigg].
\end{align*}
Thus $\frac{C_x(j)+T_x(j)}{2}$ is an HS-eigenvalue of the normal oriented Cayley graph $\Cay(\Gamma, S^3_x)$. Therefore by Theorem~\ref{NorOriCayGraphIntegCh5}, $\frac{C_x(j)+T_x(j)}{2}$ is an integer. Since $C_x(j)$ is an integer (by Theorem~\ref{NorMixCayGraphInteg}), $T_x(j)$ is also an integer for each $j \in \{ 1,\ldots ,h\}$.

\begin{lema}\label{NewSum4mLemma22} Let $x\in \Gamma(3)$ be tolerable and $\ord(x)=3m$. If $m\not\equiv 0 \Mod 3$, then
$$T_x(j)= \left\{ \begin{array}{ll}
					-2\sqrt{3} \Im(\chi_j(x^m)) C_{x^3}(j) & \mbox{if } m \equiv 1 \Mod 3 \\
					-2\sqrt{3} \Im(\chi_j(x^{2m}))C_{x^3}(j)  & \mbox{if } m \equiv 2 \Mod 3.
				\end{array}\right.$$ Moreover, $\frac{T_x(j)}{3}$ is an integer for each $j \in \{ 1,\ldots , h \}$.
\end{lema}
\begin{proof} Let $S_x^3= \langle\!\langle x_1 \rangle\!\rangle \cup \cdots \cup \langle\!\langle x_k \rangle\!\rangle$ for some $x_1,\ldots,x_k\in \Cl(x)$ and $j \in \{ 1,\ldots , h \}$. We get
\begin{equation*}
\begin{split}
T_x(j) &= \frac{1}{\chi_j(\mathbf 1)} \sum_{i=1}^{k}  \sum_{s \in \langle\!\langle x_i \rangle\!\rangle}\i \sqrt{3}( \chi_j(s)-\chi_j(s^{-1}))\\
&= \left\{ \begin{array}{ll}
					 \frac{1}{\chi_j(\mathbf 1)}\sum\limits_{i=1}^{k}  \sum\limits_{s \in [ x_i^3 ]}\i \sqrt{3} \big[ \chi_j(x_i^m) \chi_j(s)-\chi_j(x_i^{-m})\chi_j(s^{-1})\big] & \mbox{if } m \equiv 1 \Mod 3 \\
					 \frac{1}{\chi_j(\mathbf 1)}\sum\limits_{i=1}^{k}  \sum\limits_{s \in [ x_i^{3} ]} \i \sqrt{3}\big[ \chi_j(x_i^{2m})\chi_j(s)-\chi_j(x_i^{-2m})\chi_j(s^{-1})\big] & \mbox{if } m \equiv 2 \Mod 3
				\end{array}\right.\\
				&= \left\{ \begin{array}{ll}
					 \frac{\i \sqrt{3}}{\chi_j(\mathbf 1)}\sum\limits_{i=1}^{k}  \bigg[ \chi_j(x_i^m)\sum\limits_{s \in [ x_i^3 ]} \chi_j(s)-\overline{\chi_j(x_i^{m})} \sum\limits_{s \in [ x_i^3 ]}\chi_j(s^{-1})\bigg] & \mbox{if } m \equiv 1 \Mod 3 \\
					 \frac{\i \sqrt{3}}{\chi_j(\mathbf 1)}\sum\limits_{i=1}^{k}  \bigg[ \chi_j(x_i^{2m}) \sum\limits_{s \in [ x_i^{3} ]} \chi_j(s)-\overline{\chi_j(x_i^{2m})} \sum\limits_{s \in [ x_i^{3} ]} \chi_j(s^{-1})\bigg] & \mbox{if } m \equiv 2 \Mod 3
				\end{array}\right.\\
					&= \left\{ \begin{array}{ll}
					 \frac{\i \sqrt{3}}{\chi_j(\mathbf 1)}\sum\limits_{i=1}^{k}  \bigg[ \chi_j(x_i^m)\sum\limits_{s \in [ x_i^3 ]} \chi_j(s)-\overline{\chi_j(x_i^{m})} \sum\limits_{s \in [ x_i^3 ]}\chi_j(s)\bigg] & \mbox{if } m \equiv 1 \Mod 3 \\
					 \frac{\i \sqrt{3}}{\chi_j(\mathbf 1)}\sum\limits_{i=1}^{k}  \bigg[ \chi_j(x_i^{2m}) \sum\limits_{s \in [ x_i^{3} ]} \chi_j(s)-\overline{\chi_j(x_i^{2m})} \sum\limits_{s \in [ x_i^{3} ]} \chi_j(s)\bigg] & \mbox{if } m \equiv 2 \Mod 3
				\end{array}\right.\\
				&= \left\{ \begin{array}{ll}
					-2\sqrt{3} \Im(\chi_j(x^m)) \frac{1}{\chi_j(\mathbf 1)} \sum\limits_{i=1}^{k}  \sum\limits_{s \in [ x_i^{3} ]} \chi_j(s) & \mbox{if } m \equiv 1 \Mod 3 \\
					-2\sqrt{3} \Im(\chi_j(x^{2m})) \frac{1}{\chi_j(\mathbf 1)}  \sum\limits_{i=1}^{k}  \sum\limits_{s \in [ x_i^{3} ]} \chi_j(s)  & \mbox{if } m \equiv 2 \Mod 3
				\end{array}\right.\\
&= \left\{ \begin{array}{ll}
					-2\sqrt{3} \Im(\chi_j(x^m)) C_{x^3}(j) & \mbox{if } m \equiv 1 \Mod 3 \\
					-2\sqrt{3} \Im(\chi_j(x^{2m}))C_{x^3}(j)  & \mbox{if } m \equiv 2 \Mod 3.
				\end{array}\right.\\
\end{split} 
\end{equation*} 
Here the second equality follows from Part (ii) of Lemma~\ref{NewLemmaEquivaTrans1Ch5}, and the fourth equality follows from Lemma~\ref{NewLemmaEquivaTrans3Ch5}. Let $d_j=\chi_j(\mathbf 1)$. By Theorem~\ref{NewThmChap1Added}, we have $\chi_j(x^{m}) = \sum\limits_{\ell=1}^{d_j} \epsilon_{j\ell}$, where $\epsilon_{j1},\ldots ,\epsilon_{jd_j}$ are cube roots of unity. Therefore, $2\sqrt{3}\Im(\chi_j(x^m))$ is an integer multiple of $3$. Similarly, $2\sqrt{3}\Im(\chi_j(x^{2m}))$ is also an integer multiple of $3$. Hence $\frac{T_x(j)}{3}$ is an integer for each $j \in \{ 1,\ldots , h \}$.
\end{proof}

\begin{lema}\label{NewSum4mLemma33Ch5} Let $ x\in \Gamma$ be tolerable and $\ord(x)=3^{t}m$. If $m\not\equiv 0 \Mod 3$ and $t \geq 2$, then
$$2T_x(j)= \left\{ \begin{array}{ll}
					-2\sqrt{3}\bigg(\sum\limits_{s\in G_{9,3}^1(1)} \Im(\chi_j(x^{sm})) \bigg) C_{x^3}(j) & \mbox{if } m \equiv 1 \Mod 3 \\
					-2\sqrt{3}\bigg(\sum\limits_{s\in G_{9,3}^2(1)} \Im(\chi_j(x^{sm})) \bigg) C_{x^3}(j)  & \mbox{if } m \equiv 2 \Mod 3.
				\end{array}\right.$$ Moreover, $\frac{T_x(j)}{3}$ is an integer for each $j \in \{ 1,\ldots , h \}$.
\end{lema}
\begin{proof} Let $S_x^3= \langle\!\langle x_1 \rangle\!\rangle \cup \cdots \cup \langle\!\langle x_k \rangle\!\rangle$ for some $x_1,\ldots,x_k\in \Cl(x)$ and $j \in \{ 1,\ldots , h \}$.  We use the fact that each $\langle\!\langle x_i \rangle\!\rangle$ can be written as disjoint unions in two different ways using Part (vi) and Part (vii) of Lemma~\ref{NewLemmaEquivaTrans1Ch5}. For $m \equiv 1 \Mod 3$, using Part (vi) and Part (vii) of Lemma~\ref{NewLemmaEquivaTrans1Ch5}, we have
\begin{align}
&~~2\sum_{s \in \langle\!\langle x_i \rangle\!\rangle} \i \sqrt{3}( \chi_j(s)-\chi_j(s^{-1})) \nonumber\\
=& \sum_{s \in \langle\!\langle x_i \rangle\!\rangle} \i \sqrt{3}( \chi_j(s)-\chi_j(s^{-1})) + \sum_{s \in \langle\!\langle x_i \rangle\!\rangle} \i \sqrt{3}( \chi_j(s)-\chi_j(s^{-1})) \nonumber\\
=&\sum_{s \in x_i^m[x_i^3] } \i \sqrt{3}( \chi_j(s)-\chi_j(s^{-1})) + \sum_{s \in x_i^{4m}\langle\!\langle x_i^{-3}\rangle\!\rangle } \i \sqrt{3}( \chi_j(s)-\chi_j(s^{-1})) \nonumber\\
& + \sum_{s \in x_i^{7m}[x_i^3] } \i \sqrt{3}( \chi_j(s)-\chi_j(s^{-1})) + \sum_{s \in x_i^{4m}\langle\!\langle x_i^{3}\rangle\!\rangle } \i \sqrt{3}( \chi_j(s)-\chi_j(s^{-1})) \nonumber\\
=&\sum_{s \in x_i^m[x_i^3] } \i \sqrt{3}( \chi_j(s)-\chi_j(s^{-1})) + \sum_{s \in x_i^{4m}[ x_i^{3} ] } \i \sqrt{3}( \chi_j(s)-\chi_j(s^{-1})) \nonumber\\
& + \sum_{s \in x_i^{7m}[x_i^3] } \i \sqrt{3}( \chi_j(s)-\chi_j(s^{-1})) \nonumber\\
=& -2\sqrt{3} \Im(\chi_j(x_i^{m}))\sum_{s \in [x_i^3]} \chi_j(s)  -2\sqrt{3} \Im(\chi_j(x_i^{4m}))\sum_{s \in [x_i^3]} \chi_j(s)  \nonumber\\
&- 2\sqrt{3} \Im(\chi_j(x_i^{7m}))\sum_{s \in [x_i^3]} \chi_j(s) \nonumber\\
=& -2\sqrt{3}\bigg(\sum\limits_{r\in G_{9,3}^1(1)} \Im(\chi_j(x_i^{rm})) \bigg) \sum_{s \in [x_i^3]} \chi_j(s) \label{NewEqFinalCorr1}
\end{align}
for each $i \in \{ 1, \ldots, k\}$. Similarly, for $m \equiv 2 \Mod 3$ we have
\begin{align}
2\sum_{s \in \langle\!\langle x_i \rangle\!\rangle} \i \sqrt{3}( \chi_j(s)&-\chi_j(s^{-1})) = -2\sqrt{3}\bigg(\sum\limits_{r\in G_{9,3}^2(1)} \Im(\chi_j(x_i^{rm})) \bigg)\sum_{s \in [x_i^3]} \chi_j(s)\label{NewEqFinalCorr2}
\end{align}
for each $i \in \{ 1, \ldots, k\}$. 
Using Equation (\ref{NewEqFinalCorr1}) and Equation (\ref{NewEqFinalCorr2}), we get
\begin{align}
2T_x(j) &= \frac{1}{\chi_j(\mathbf 1)} \sum_{i=1}^{k} 2 \sum_{s \in \langle\!\langle x_i \rangle\!\rangle} \i \sqrt{3}( \chi_j(s)-\chi_j(s^{-1})) \nonumber\\
&= \left\{ \begin{array}{ll}
					 -2\sqrt{3}\bigg(\sum\limits_{r\in G_{9,3}^1(1)} \Im(\chi_j(x^{rm})) \bigg) \frac{1}{\chi_j(\mathbf 1)}\sum\limits_{i=1}^{k}  \sum\limits_{s \in [ x_i^3 ]} \chi_j(s) & \mbox{if } m \equiv 1 \Mod 3 \\
					 -2\sqrt{3}\bigg(\sum\limits_{r\in G_{9,3}^2(1)} \Im(\chi_j(x^{rm})) \bigg)\frac{1}{\chi_j(\mathbf 1)}\sum\limits_{i=1}^{k}  \sum\limits_{s \in [ x_i^{3} ]}\chi_j(s) & \mbox{if } m \equiv 2 \Mod 3
				\end{array}\right.\nonumber\\
&= \left\{ \begin{array}{ll}
					-2\sqrt{3}\bigg(\sum\limits_{r\in G_{9,3}^1(1)} \Im(\chi_j(x^{rm})) \bigg) C_{x^3}(j) & \mbox{if } m \equiv 1 \Mod 3 \\
					-2\sqrt{3}\bigg(\sum\limits_{r\in G_{9,3}^2(1)} \Im(\chi_j(x^{rm})) \bigg) C_{x^3}(j)  & \mbox{if } m \equiv 2 \Mod 3.
				\end{array}\right.\label{EqNewSum4mLemma77}
\end{align}
The last equality in the preceding equations follows from Lemma~\ref{NewLemmaEquivaTrans3Ch5}. 

Let $d_j=\chi_j(\mathbf 1)$. Assume that $t=2$. By Theorem~\ref{NewThmChap1Added}, we have $\chi_j(x^{m}) = \sum\limits_{\ell=1}^{d_j} \epsilon_{j\ell}$, where $\epsilon_{j1},\ldots ,\epsilon_{jd_j}$ are some $9$-th roots of unity. We have
\begin{equation} \label{EqNewSum4mLemma66}
\begin{split}
-2\sqrt{3} \sum\limits_{r\in G_{9,3}^1(1)} \Im(\chi_j(x^{rm})) &= \i \sqrt{3} \sum\limits_{r\in G_{9,3}^1(1)} (\chi_j(x^{rm}) -\chi_j(x^{-rm}))\\
&=\i \sqrt{3}\sum\limits_{r\in G_{9,3}^1(1)} \bigg( \sum\limits_{\ell=1}^{d_j} \epsilon_{j\ell}^r - \sum\limits_{\ell=1}^{d_j} \epsilon_{j\ell}^{-r} \bigg)\\
&= \sum\limits_{\ell=1}^{d_j} \sum\limits_{r\in G_{9,3}^1(1)} \i \sqrt{3} (\epsilon_{j\ell}^r - \epsilon_{j\ell}^{-r}).
\end{split} 
\end{equation} 

Note that $\sum\limits_{r\in G_{9,3}^1(1)}\i \sqrt{3} (\epsilon_{j\ell}^r - \epsilon_{j\ell}^{-r})= \i \sqrt{3}(\epsilon_{j\ell} - \epsilon_{j\ell}^2)(1+\epsilon_{j\ell}^3 + \epsilon_{j\ell}^6)$. Since $\epsilon_{j\ell} \in \{1,\omega_9,\omega_9^2,\ldots , \omega_9^8\}$, we see that 
\[ \sum\limits_{r\in G_{9,3}^1(1)}\i \sqrt{3} (\epsilon_{j\ell}^r - \epsilon_{j\ell}^{-r}) = \left\{ \begin{array}{ll}
					 \pm 9 & \mbox{if } \epsilon_{j\ell} \in \{\omega_9^3,\omega_9^6\} \\
					 0 & \mbox{otherwise}.
				\end{array}\right.\]
Thus $\sum\limits_{r\in G_{9,3}^1(1)}\i \sqrt{3} (\epsilon_{j\ell}^r - \epsilon_{j\ell}^{-r})$ is an integer multiple of $3$. Therefore by Equation~(\ref{EqNewSum4mLemma66}), we find that $-2\sqrt{3} \sum\limits_{r\in G_{9,3}^1(1)} \Im(\chi_j(x^{rm}))$ is an integer multiple of $3$. Similarly, $-2\sqrt{3} \sum\limits_{r\in G_{9,3}^2(1)} \Im(\chi_j(x^{rm}))$ is also an integer multiple of $3$. Using Equation~(\ref{EqNewSum4mLemma77}), we find that $\frac{2T_x(j)}{3}$ is an integer. Since $T_x(j)$ is an integer, integrality of $\frac{2T_x(j)}{3}$ gives that $\frac{T_x(j)}{3}$ is also an integer for each $j \in \{ 1, \ldots, h\}$. 

 Now assume that $t \geq 3$ and $j \in \{ 1,\ldots h \}$. Let 
$$A_x(j):= \left\{ \begin{array}{ll}
					-2\sqrt{3}\bigg(\sum\limits_{r\in G_{9,3}^1(1)} \Im(\chi_j(x^{rm})) \bigg)  & \mbox{if } m \equiv 1 \Mod 3 \\
					-2\sqrt{3}\bigg(\sum\limits_{r\in G_{9,3}^2(1)} \Im(\chi_j(x^{rm})) \bigg)  & \mbox{if } m \equiv 2 \Mod 3.
				\end{array}\right.$$
By Equation~(\ref{EqNewSum4mLemma77}), we find that $2T_x(j)= A_x(j) C_{x^3}(j)$. Therefore $A_x(j)$ is a rational algebraic integer whenever $C_{x^3}(j)\neq 0$. Thus, if $C_{x^3}(j)\neq 0$ then $A_x(j)$ is an integer. Now by Lemma~\ref{NewSum4mLemma11Ch5} and Equation~(\ref{EqNewSum4mLemma77}), $\frac{2T_x(j)}{3}$ is an integer, and hence $\frac{T_x(j)}{3}$ is also an integer.
\end{proof}

Let $S$ be a nonempty set in $\mathbb{E}(\Gamma)$ and $S$ be expressible as a union of some conjugacy classes of $\Gamma$. Then $S$ is a skew-symmetric subset of $\Gamma$ that is closed under both conjugacy and the equivalence relation $\simeq$. Let $S={\rm Cl}(x_1)\cup\cdots \cup  {\rm Cl}(x_k)=\langle\!\langle  y_1 \rangle\!\rangle  \cup \cdots \cup \langle\!\langle  y_r \rangle\!\rangle$ for some $x_1,\ldots,x_k,y_1,\ldots,y_r\in \Gamma(3)$. We see that
\begin{align*}
S=  {\rm Cl}(x_1)\cup\cdots \cup  {\rm Cl}(x_k)
= \left(\text{\footnotesize$\bigcup\limits_{s\in {\rm Cl}(x_1)}$} \langle\!\langle  s \rangle\!\rangle  \right)\cup \cdots \cup  \left(\text{\footnotesize$\bigcup\limits_{s\in {\rm Cl}(x_k)}$} \langle\!\langle  s \rangle\!\rangle  \right) 
= S_{x_1}^3 \cup\cdots \cup S_{x_k}^3 .
\end{align*}
Due to Lemma~\ref{partition3}, we can assume that the sets $S_{x_1}^3 ,\ldots , S_{x_k}^3$ are all distinct. In the following result, we also prove the converse of Theorem~\ref{MinCharacEisensteinIntegCh5}.

\begin{theorem}\label{MinCharacEisensteinInteg2Ch5}
If $\Gamma$ is a finite group, then the normal mixed Cayley graph $\Cay(\Gamma,S)$ is Eisenstein integral if and only if it is HS-integral.
\end{theorem}
\begin{proof}
Assume that $\Cay(\Gamma,S)$ is HS-integral and $j \in \{ 1,\ldots , h \}$. Then $\Cay(\Gamma,S\setminus \overline{S})$ is integral, and so $f_{j}(S)$ is an integer. By Theorem~\ref{MixedHSintegralChara}, $\overline{S} \in \mathbb{E}(\Gamma)$, which implies that $\overline{S}= S_{x_1}^3 \cup  \cdots \cup S_{x_k}^3 $ for some $x_1,\ldots,x_k \in \Gamma (3)$, where the sets $S_{x_1}^3, \ldots , S_{x_k}^3$ are all distinct. Using the fact that $S_{x_i}^3 \cup S_{x_i^{-1}}^3 = S_{x_i}^1$, we have $\overline{S} \cup \overline{S}^{-1}= S_{x_1}^1 \cup  \cdots \cup S_{x_k}^1 $. Therefore
\begin{align}
g_{j}(S)&=\frac{1}{2\chi_{j}(\mathbf{1})} \sum\limits_{s\in \overline{S}\cup \overline{S}^{-1}}\chi_{j}(s) -\frac{1}{6\chi_{j}(\mathbf{1})} \sum\limits_{s\in \overline{S}}\i \sqrt{3}\left(\chi_{j}(s)-\chi_{j}(s^{-1})\right) \nonumber\\
&=\frac{1}{2\chi_{j}(\mathbf{1})} \sum\limits_{\ell=1}^{k} \sum\limits_{s\in S_{x_{\ell}}^1 }\chi_{j}(s) -\frac{1}{6\chi_{j}(\mathbf{1})} \sum\limits_{\ell=1}^{k} \sum\limits_{s\in S_{x_{\ell}}^3 }\i \sqrt{3}\left(\chi_{j}(s)-\chi_{j}(s^{-1})\right)\nonumber\\
&= \frac{1}{2} \sum\limits_{\ell=1}^{k} C_{x_{\ell}}(j) - \frac{1}{6} \sum\limits_{\ell=1}^{k} T_{x_{\ell}}(j)\nonumber\\
&= \frac{1}{2} \sum\limits_{\ell=1}^{k} \left( C_{x_{\ell}}(j) - \frac{1}{3} T_{x_{\ell}}(j) \right).\label{eqNormEisen}
\end{align}

Let $1 \leq \ell \leq k$. Since $\frac{C_{x_{\ell}}(j)+T_{x_{\ell}}(j)}{2}$ is an HS-eigenvalue of the normal oriented Cayley graph $\Cay(\Gamma, S^3_{x_{\ell}})$, the numbers $C_{x_{\ell}}(j)$ and $T_{x_{\ell}}(j)$ are integers of the same parity. By Lemma~\ref{NewSum4mLemma22} and Lemma~\ref{NewSum4mLemma33Ch5}, $\frac{T_{x_{\ell}}(j)}{3}$ is an integer. Therefore, $C_{x_{\ell}}(j)$ and $\frac{T_{x_{\ell}}(j)}{3}$ are integers of the same parity. Thus $C_{x_{\ell}}(j) - \frac{1}{3} T_{x_{\ell}}(j)$ is an even integer, and so $g_j(S)$ is an integer by Equation (\ref{eqNormEisen}). Hence by Lemma~\ref{CharaEisensteinIntegral1Ch5}, $\Cay(\Gamma,S)$ is Eisenstein integral. The other part of the theorem is proved in Theorem~\ref{MinCharacEisensteinIntegCh5}. 
\end{proof}

The following example illustrates an use of  Theorem~\ref{MinCharacEisensteinInteg2Ch5}.

\begin{ex} \normalfont
Consider the  mixed graph $\Cay(A_4, S)$ of Example \ref{exampleMixed}. We have already seen that it is HS-integral, and hence it must be Eisenstein integral. We find that the spectrum of $\Cay(A_4, S)$ is $\{[\gamma_1]^1, [\gamma_2]^1, [\gamma_3]^1, [\gamma_4]^9 \}$, where $\gamma_1=7,\gamma_2= 3+4\omega_3 , \gamma_3 = -1-4\omega_3,$ and $\gamma_4=-1 .$ It is clear that the eigenvalues of $\Cay(A_4, S)$ are Eisenstein integers.
\end{ex}


\noindent\textbf{Acknowledgements}

I sincerely thank Dr. Bikash Bhattacharjya for his guidance, enthusiastic encouragement and useful critiques of this research work.


\end{document}